\documentclass[12pt]{amsart}
\usepackage{amssymb}
%

\usepackage{hyperref}   
\hypersetup{
    colorlinks=true,
    linkcolor=black,
    citecolor=black,
    filecolor=black,
   urlcolor=black}

\newtheorem{thm}{Theorem}
\newtheorem{prop}{Proposition}
\newtheorem*{thmoneprime}{Theorem 1'}
\newtheorem*{thmtwoprime}{Theorem 3'}
\newtheorem{lem}{Lemma}

\theoremstyle{definition}
\newtheorem*{defn}{Definition}

\def\bN{\mathbb{N}}

\def\al{\alpha}
\def\be{\beta}
\def\ga{\gamma}

\def\eps{\epsilon}
\def\om{\omega}
\def\Om{\Omega}
\newcommand{\m}[1]{\mathcal{#1}}

\usepackage[ marginparsep=5mm,heightrounded,centering,margin=3.47cm]{geometry}

\title[On Growth of random groups of intermediate growth]{On Growth of random groups \\ of intermediate growth}

\author{Mustafa G. Benli, Rostislav Grigorchuk and Yaroslav Vorobets}

\address{Mustafa G. Benli }
\email{mbenli@math.tamu.edu}

\address{Rostislav Grigorchuk }
\email{grigorch@math.tamu.edu}

\address{Yaroslav Vorobets}
\email{yvorobet@math.tamu.edu}

\address{Texas A{\&}M University \\ Mailstop 3368 \\ College Station \\
TX 77843--3368 \\ USA.}

\date{April 28 , 2013\\
The first two authors were supported by NSF grant DMS-1207699}

\begin{document}

  \begin{abstract} 
  
  We study the growth of typical groups from the family of $p$-groups of intermediate growth
  constructed by the second author. We find that, in the sense of category, a generic group exhibits 
  oscillating growth with no universal upper bound.  At the same time, from a measure-theoretic point of view (i.e., almost surely relative to an appropriately
  chosen probability measure), the growth function is bounded by $e^{n^\alpha}$ for some $\alpha<1$.

%

 \end{abstract}

 \maketitle

 \section{Introduction}

There are  few approaches to randomness in group theory. The most known are associated with the names of
Gromov and Olshanskii. For the account of these approaches and further literature see,  \cite{Oliv}. As far as
the authors are concerned, these approaches deal with the models    when in certain classes of  finitely
presented groups  one locates "generic"  finite presentations with prescribed properties.   These models
are variation of the \emph{density model} and randomness in them appears in the form of \emph{frequency} or
\emph{density}, which correspond to the classic "naive" approach to probability in mathematics.

The modern Kolmogorov's approach to probability assumes   existence of  a space supplied with a sigma-algebra
of measurable sets and a probability measure on it.

There can be different constructions of spaces of groups and one of them was suggested in
\cite{grigorch:degrees}, where
 Gromov's idea from \cite{gromov:poly_growth} of convergence of marked metric spaces was transformed into the notion
 of the compact totally
disconnected topological metrizable  space $\mathcal M_k$ of marked $k$-generated groups, $k \geq 2$. Later
it was discovered that  this topology is related to the Chabauty topology in the space of normal subgroups of the
free group of rank $k$ \cite{champ:grps_fini}. Observe that in general the Chabauty topology is defined in the
space of closed subgroups of a locally compact group (there is analogous notion in differential geometry
\cite{benedetti:book}), and that in  the case of a discrete group $G$ it is nothing but the topology induced
on the set of subgroups of $G$ by the Tychonoff topology of the space $\{0,1\}^G$. 

The main result of \cite{grigorch:degrees}  is the  construction of the first examples of groups of
intermediate growth, thus answering a question of Milnor \cite{milnor:note68}. In fact, an uncountable
family of 3-generated groups was introduced and studied in \cite{grigorch:degrees}, and among other results
it was shown  there that the  set of rates of growth of finitely generated groups has the cardinality of the
continuum, and that there are pairs of groups with incomparable growth (the growth rates of two groups are different
but neither grows faster than the other; in fact, the space of rates of growth of
finitely generated groups contains an anti-chain of the cardinality of the continuum).
The possibility of
such phenomenon is based on the fact that there are groups with  \emph{oscillating growth}, i.e. groups whose growth on
different parts of the range of the argument of growth function (which is a set of natural numbers)
behaves alternatively in two fashions: in the intermediate (between polynomial and exponential) way and
exponential way. In this paper we will use one  particular form of oscillating property which will be
defined below.

The construction in \cite{grigorch:degrees} deals  with torsion 2-groups of intermediate growth. 
It also provides interesting examples of self-similar groups and 
first examples of just-infinite branch groups (see \cite{grigorchuk-s:standrews}).
Later a similar
construction of $p$-groups of intermediate growth was produced for arbitrary prime $p$ as well as the first
example of a torsion free group of intermediate growth \cite{grigorch:degrees85}.

The reason for introducing   the space $\mathcal M_k$  of marked groups in \cite{grigorch:degrees}   was to
show that this space in the cases  $k=2,3$ (and hence for all $k$) contains a closed subset of groups 
homeomorphic to a Cantor set consisting primarily of torsion groups of intermediate growth.  
Later other interesting families of
groups constituting a Cantor set of groups and satisfying various properties were produced and used for
answering different questions  \cite{champ:grps_fini,nekrash:minimal07}.  The topology in the space
$\mathcal M_k$ was used in \cite{grigorch:degrees,grigorch:habil} not only for study of growth but also for
investigations of algebraic properties of the involved groups. For instance, among  many ways of showing that the
involved groups are not finitely presented,
 there is one which makes use of this topology
(the topic of finite presentability  of groups in the context of growth, amenability and topology is
discussed in detail in \cite{BGH12}). At present there is a big account of results related to the 
space of marked groups
and various algebraic, geometric and asymptotic properties of groups including such properties as
(T)-property of Kazhdan, local embeddability into finite or amenable groups (so-called LEF and LEA properties), 
being sofic and various other properties (see \cite{tullio:book} for a
comprehensive source of these).

For each $k \geq 2$ there is a natural embedding of $\mathcal M_k$ into $\mathcal M_{k+1}$  and one can
consider the inductive limit $\mathcal M= \lim_k \mathcal M_k$ which is a locally compact totally
disconnected space.
As  was observed by Champetier  in \cite{champ:grps_fini} the group of Nielsen
transformations over  infinite generating set acts naturally on this space with  orbits consisting of
isomorphic groups. Any Baire  measure on $\m{M}$, i.e., a measure defined on the sigma algebra generated by compact $G_\delta$
sets (countable intersections of open sets) with
finite values on compact sets, that is invariant (or at least quasi-invariant) with
respect to this action would be a good choice for the model of random finitely generated group  (this
approach based on discussions of the second author with E. Ghys is presented in \cite{grigorch:solved}).
Unfortunately, at the moment no such measures were produced. This is also related to the question of
existence of "good" measures  invariant (or quasi-invariant) under the action of the 
automorphism group of a free group  $F_k$ of rank $k\geq 2$, 
with support in the set of normal subgroups of $F_k$.

Fortunately, another approach can be used. It is based on the following idea.  Assume we have a compact
$X\subset \mathcal M_k$ of groups and a continuous map $\tau: X\rightarrow X$. Then by the Bogolyubov-Krylov
Theorem there is at least one $\tau$-invariant probability measure $\mu$ on $X$.  Suppose also that we have a
certain  group property $\mathcal{P}$ (or a family of properties), 
and that the subset $X_{\mathcal{P}}\subset X$ of groups
satisfying this property is measurable $\tau$-invariant (i.e. $\tau^{-1}(X_{\mathcal{P}})=X_{\mathcal{P}}$).  
Then one may be interested in the
measure $\mu(X_{\mathcal{P}})$ which is 0 or 1 in the case of ergodic measure  
(i.e.  when the only invariant measurable subsets up
to sets of $\mu$ measure 0 are empty set and the whole set $X$).  Observe that 
 by (another) Bogolyubov-Krylov theorem, ergodic measure  always
exist in the situation of a continuous map on a  metrizable compact space and are just extreme
points of the simplex of invariant measures. The described model allows to 
speak about typical properties of a random group from the family $(X,\mu)$.

The alternative approach when the measure $\mu$ is not specified is the study of the typical properties of groups
in compact $X$ from topological (or categorical) point of view i.e., in the sense of Baire category.  Under
this approach a group property $\mathcal{P}$ is typical if the subset $X_\mathcal{P}$ is co-meager i.e. its complement
$X\setminus X_\mathcal{P}$ is meager (a countable union of nowhere dense subsets of $X$).  Its happens quite often that
what is typical in the measure sense is not typical in the sense of category and this paper  gives one more
example of this sort.

\section{Statement of main results}
\label{sec2}

In Ergodic Theory  (and more generally in Probability Theory), one of the most important models is the model
of a shift in a space of sequences.  Given a finite alphabet $Y=\{s_1, \dots, s_k\}$,  one considers a space
$\Omega=Y^{\mathbb N}$ of infinite sequences $\om=(\om_n)_{n=1}^{\infty}, \om_n \in Y$ endowed with the
Tychonoff product topology. A natural transformation in such space is a shift  $\tau: \Omega\rightarrow
\Omega, (\tau(\om))_{n}=\om_{n+1}$. There is a lot of invariant  measures for the dynamical  system
$(\Omega,\tau)$ and in fact the simplex $M_{\tau}(\Omega)$ of invariant measures is Poulsen simplex (i.e.
ergodic measures are dense in weak-$\ast$ topology).

Let $p$ be  prime  and consider the set $\{0,1,\dots, p\}$ as an alphabet with the corresponding set
$\Omega_p=\{0,1,\ldots,p\}^\mathbb{N}$ of infinite sequences endowed with the shift $\tau:\Om_p\rightarrow \Om_p$. 
Let $\Om_{p,0}$ denote the subset of sequences which are eventually constant and
$\Om_{p,\infty}$ the set of sequences in which all symbols $\{0,1,\ldots,p\}$ appear infinitely
often. Note that  $\Om_{p,\infty}$ and $\Om_{p,0}$ are $\tau$ invariant.

In  \cite{grigorch:degrees,grigorch:degrees85}   for each $\om \in \Omega_p$ a group  $G_{\om}$  with a set
$S_{\om}=\{a,b_{\om}, c_{\om}\}$ of three generators acting on the  interval $[0,1]$  by  Lebesgue measure
preserving  transformations was constructed.
One of the specific features of this
construction is  that if two sequences $\om,\eta \in \Omega_p$, which are not eventually constant,
have the same prefix of length $n$ then the
corresponding groups  $G_\om,G_\eta$  have isomorphic Cayley graphs in the neighborhood of the identity
element of radius $2^{n-1}$.  Replacing the groups 
$G_\om , \om \in  \Om_{p,0}$ with the appropriate limits (again denoted by $G_\om$),
that is, taking the closure
of the set $\{(G_\om,S_\om) \mid \om \in \Om_p \setminus \Om_{p,0})\}$ in $\m{M}_3$,
one obtains a compact subset $\m{G}_p=\{(G_\om,S_\om) \mid \om \in \Om_p\}$ of $\m{M}_3$ which is 
homeomorphic to $\Om_p$ (via the correspondence $\om \mapsto (G_\om,S_\om)$)
and hence homeomorphic to  a Cantor set. In what follows we will continue to keep the notation $G_\om,\;\om\in\Om_p$
to denote this groups after this modification. Also quite often we will identify $\m{G}_p$
with $\Om_p$.
In the case $p=2$ the new limit groups $G_\om,\; \om \in \Om_{2,0}$
are known to be virtually metabelian groups of exponential growth and there
is no analogous result for the case $p>2$. This is the underlying reason that
Theorem \ref{main2} below is stated for only $p=2$.

Another important feature  of the construction is  that for all but
countably many $\om\in \Om_2$  and for all $\om \in \Om_p$  the groups $G_{\om}$ and $(G_{\tau(\om)})^p$ (direct
product of $p$ copies of $G_{\om}$) are abstractly commensurable (i.e., contain  isomorphic subgroups of
finite index). Thus shift $\tau$ preserves many of   group properties on the set of full measure when $\mu$ is a
$\tau$ invariant measure supported on $\Om_{p,\infty}$,  for instance,  the property to
be a torsion group.   While   for  some properties of the groups  $G_{\om}$ it is quite easy to decide whether
it is typical or not,  there  are some  properties for which such a question is more
difficult to answer. Among them  is the  property  to have a growth function bounded  from above 
(or below) by a specific function.

 Given  functions $f_1,f_2:\mathbb{N}\to \mathbb{N}$, we write 
$f_1 \preceq f_2$ if $f_1$ grows no faster than $f_2$ and 
$f_1\sim f_2$ if $f_1 \preceq f_2$ and $f_2 \preceq f_1$.  $f_1\prec f_2$ means that 
$f_1\preceq f_2$ but $f_1 \nsim f_2$
   (precise definitions are  given in Section \ref{prelim}).  For any $\om\in\Om_p$ let $\gamma_\om(n)$ denote the growth
function of the group $G_\om$.
It was shown in \cite{grigorch:degrees,grigorch:degrees85} that if symbols of the alphabet  $\{0,1,\dots,
p\}$  are more or less uniformly distributed in a sequence  $\om$  then $\gamma_\om(n)$ grows 
slower than $e^{n^{\alpha}}$  with  constant $\alpha <1$.  At the same time, in the case $p=2$  
for any function $f(n)\prec e^n$ there is a sequence $\om$ such that $\gamma_\om(n)$ grows not slower than $f(n)$.

The upper and lower bounds  by functions of the type $e^{n^{\alpha}}$  with  constant $0<\alpha <1$  are of
special importance   in study of growth of finitely generated groups and there is a number of
interesting results and conjectures associated with  them. 
One of the  main conjectures says that if the growth of a group $G$ is slower than 
$e^{\sqrt{n}}$ then it is actually polynomial \cite{grigorch:ICM90,grigorch:milnor2011,grigorch:gapconj12}.   

We are ready to formulate our
results. 


\begin{thm} 
\label{ana1}
Suppose $\mu$ is a Borel probability measure on $\Om_p$ that is invariant and
ergodic relative to the shift transformation $\tau:\Om_p\to\Om_p$.   
\begin{itemize}
 \item[a)]If the measure $\mu$ is supported on $\Om_{p,\infty}$, 
then there exists $\alpha=\alpha(\mu,p)<1$ such that $\ga_\om(n)
\preceq e^{n^\alpha}$ for $\mu$-almost all $\om\in\Om_p$.
\item[b)]In the case  $\mu$ is the uniform Bernoulli measure on $\Om_2$, one can take $\alpha=0.999$.
\end{itemize}

\end{thm}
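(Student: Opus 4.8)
The plan is to turn the self-similar structure behind the commensurability $G_\om\sim (G_{\tau\om})^p$ into a quantitative length-contraction estimate, feed the per-level contraction rates into the pointwise ergodic theorem, and finally optimize an elementary counting bound over the number of tree levels. The first step is to record the contraction lemma underlying the construction in \cite{grigorch:degrees,grigorch:degrees85}: working inside the finite-index level-$k$ stabilizer $\mathrm{St}_\om(k)$ and using the injective section map $\mathrm{St}_\om(k)\hookrightarrow (G_{\tau^k\om})^{p^k}$, a group element $g$ with $|g|_{S_\om}\le n$ has its $p^k$ level-$k$ sections $g_v\in G_{\tau^k\om}$ satisfying $\sum_{|v|=k}|g_v|_{S_{\tau^k\om}}\le \Lambda_k(\om)\,n+Cp^k$, where $\Lambda_k(\om)=\prod_{j=0}^{k-1}\eta(\tau^j\om)$ and $\eta\colon\Om_p\to(0,1]$ is a locally constant function (depending on finitely many initial symbols of its argument) produced by a single application of the level-one decomposition. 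The multiplicative form of $\Lambda_k$ comes from iterating the one-level estimate down the tree, which is legitimate because for $g\in\mathrm{St}_\om(k)$ the section at a level-$j$ vertex lies in $\mathrm{St}_{\tau^j\om}(k-j)$; the geometric accumulation of the additive errors is absorbed into $Cp^k$ since $\eta\le1$.

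The second step is a crude count. An element of $\mathrm{St}_\om(k)$ is determined by the tuple of its $p^k$ sections, each lying in the $3$-generated group $G_{\tau^k\om}$. Encoding such a tuple of section words of total length at most $M:=\Lambda_k(\om)\,n+Cp^k$ as one word over the alphabet $S_{\tau^k\om}$ enlarged by a separator symbol (an alphabet of fixed finite size $q$), and multiplying by the index $[G_\om:\mathrm{St}_\om(k)]\le e^{cp^k}$, gives $\ga_\om(n)\le e^{cp^k}\,q^{\,M+p^k}$. Taking logarithms, this reads $\log\ga_\om(n)\le c'\,p^k+(\log q)\,\Lambda_k(\om)\,n$ for a constant $c'=c'(p,q)$, valid for every $k$.

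The third step extracts the decay of $\Lambda_k$ and optimizes in $k$. By Birkhoff's ergodic theorem applied to the bounded function $\log\eta$, for $\mu$-almost every $\om$ one has $\frac1k\log\Lambda_k(\om)=\frac1k\sum_{j=0}^{k-1}\log\eta(\tau^j\om)\to\int_{\Om_p}\log\eta\,d\mu=:-\be$. The crucial point is that $\be>0$: the contraction is strict, $\eta(\om)<1$, whenever the relevant finite window of symbols of $\om$ is non-degenerate, and such $\om$ carry positive $\mu$-measure. Indeed, were $\mu$ to give full measure to the set where every window is constant, invariance would force $\mu$ onto the countably many constant sequences, contradicting $\mu(\Om_{p,\infty})=1$; hence $\mu(\{\eta<1\})>0$ and $\int\log\eta\,d\mu<0$. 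Fixing a small $\eps>0$, for a.e.\ $\om$ we get $\Lambda_k(\om)\le e^{(-\be+\eps)k}$ for all large $k$, so $\log\ga_\om(n)\le c'p^k+(\log q)\,e^{(-\be+\eps)k}n$. Choosing $k=k(n)$ to balance the two summands, that is $k\sim \frac{\log n}{\log p+\be-\eps}$, yields $\log\ga_\om(n)\le C\,n^{\al}$ with $\al=\frac{\log p}{\log p+\be-\eps}<1$; since $e^{Cn^\al}\preceq e^{n^\al}$, this proves part (a) with $\al=\al(\mu,p)$.

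For part (b) I would make the previous estimate explicit. For $p=2$ and the uniform Bernoulli measure (each symbol of $\{0,1,2\}$ independent with weight $\tfrac13$), the average $\be=-\int\log\eta\,d\mu$ is a finite sum over the finitely many symbol windows, computable from the explicit contraction constants of the construction. Inverting $\al=\frac{\log 2}{\log 2+\be}$ shows that reaching $\al=0.999$ requires only the modest lower bound $\be\gtrsim 7\times10^{-4}$, which is comfortably certified by a crude estimate of the frequency of contracting windows; the deliberately non-sharp value $0.999$ is chosen so that the verification is robust. The main obstacle throughout is the first step: pinning down the one-level contraction function $\eta$ with its precise, symbol-dependent constants (and controlling the accumulated additive errors), together with establishing the strict inequality $\int\log\eta\,d\mu<0$ — this is exactly the place where the hypothesis $\mu(\Om_{p,\infty})=1$, excluding the degenerate eventually-constant sequences, does its work. (If the $k$-level contraction is not exactly multiplicative in the one-level factors, I would replace Birkhoff by Kingman's subadditive ergodic theorem applied to $\log$ of the $k$-level contraction, which is subadditive under composition of sections.)
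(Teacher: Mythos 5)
Your overall architecture (contract section lengths down the tree, count elements by their sections, feed the contraction rate into an ergodic theorem, optimize over the level $k$) is essentially the paper's, and your crude counting bound and the final optimization $\al=\log p/(\log p+\be-\eps)$ would work if the contraction estimate held. But the first step contains a genuine gap: there is no one-level contraction function $\eta(\om)<1$. The one-level estimate coming from the construction is of the form $\sum_{|v|=1}|g_v|\le|g|+1-|g|_{h}$, where $h$ is the \emph{single} generator among $b_\om,c_\om,d_\om$ whose first-level sections are trivial (which generator this is depends on $\om_1$). For elements $g$ whose geodesics avoid $h$ entirely (e.g.\ $g=(ab_\om)^k$ when $\om_1=0$), the total section length is not contracted at all; the worst-case one-level contraction coefficient equals $1$ for every value of $\om_1$. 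Strict contraction only appears after passing through a window of levels in which \emph{all} $p+1$ symbols occur: then each of $b,c,d$ dies at some level of the window, one of them must account for at least a $\tfrac16$-fraction of any geodesic, and one gets $\sum_{|v|=q}|g_v|\le\tfrac56|g|+O(2^q)$. This combinatorial lemma is the heart of the proof and is absent from your write-up; your Kingman fallback is the right instinct for handling non-multiplicativity, but it cannot manufacture the strict inequality $\be>0$ without this input.

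The second, related gap is your justification that $\be>0$. You argue that $\eta<1$ on ``non-degenerate'' windows and that otherwise $\mu$ would be supported on constant sequences. This is the wrong dichotomy: the condition needed for contraction is that a window contains all $p+1$ symbols, not that it is non-constant. Your argument, as written, would apply verbatim to the shift-invariant ergodic measure on $\Om_2$ that is Bernoulli on the two symbols $\{0,1\}$ and gives the symbol $2$ probability zero --- a measure not supported on constant sequences but with $\mu(\Om_{2,\infty})=0$ --- and would ``prove'' a bound $e^{n^\al}$, $\al<1$, for almost every such $\om$. That conclusion is false: already for $\om=(01)^\infty$ the growth lies above $\exp(n/\log^{2+\eps}n)$, which dominates every $e^{n^\al}$. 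So your argument does not actually use the hypothesis $\mu(\Om_{p,\infty})=1$ where it is needed; the correct use is that this hypothesis guarantees that, for $q$ large, the set of $\om$ whose initial $q$-window contains all symbols has measure close to $1$, which (combined with the $\tfrac56$-contraction lemma) gives $\int\log\lambda_q\,d\mu<0$ and hence $\be>0$. The paper implements this by tracking the hitting times $t_n(\om)$ of complete windows and proving $t_n(\om)/n\to C_0<\infty$ almost surely via a subadditivity argument, then computing the window statistics explicitly for the Bernoulli case to get part (b); your sketch of part (b) would need the same explicit window computation once step one is repaired.
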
If $T:\mathcal{G}_p\rightarrow \mathcal{G}_p$ is a map induced by the shift $\tau$,
our result can be interpreted as follows: For any "reasonable" $T$-invariant measure $\mu$
on $\m{G}_p \subset \m{M}_3$, a typical group in $\m{G}_p$ has growth bounded by $e^{n^\alpha}$,
where $\alpha=\alpha(\mu,p)<1$.

  The bound   for $\alpha$ given in part (b) of the Theorem \ref{ana1} is far from to be   optimal,  but  getting an
essentially better bound  would require more work. In any case it can not be below $1/2$ as for all
groups $G_\om$ of intermediate growth the corresponding growth function is bounded from below by $e^{n^{1/2}}$
\cite{grigorch:degrees,grigorch:degrees85,grigorch:hilbert}.  The  gap conjecture, discussed in \cite{grigorch:gapconj12} and
proven in certain cases, gives more information about what one can expect concerning  possible optimal
values of $\alpha$.

In fact, there is nothing special about the space $\m{M}_3$ and the following holds:

\begin{thmoneprime}
 For any $k\geq 2$ and prime $p$, $\mathcal{M}_k$ contains a compact subset 
 $\mathcal{K}_k=\{(M_\om, L_\om) \mid \om \in \Om_p\}$ homeomorphic
 to $\Omega_p$ (via the map $\om \mapsto (M_\om,L_\om)$) such that if $\mu$ is a measure supported on $\Om_{p,\infty}$ 
 there exists $\alpha=\alpha(\mu,p)<1$ such that $\ga_{M_\om}(n)
\preceq e^{n^\alpha}$ for $\mu$-almost all $\om\in\Om_p$. 
\end{thmoneprime}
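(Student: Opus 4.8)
The plan is to reduce Theorem 1' to the already-established Theorem \ref{ana1} by a standard embedding-and-change-of-markings argument, so that no new growth estimate has to be proved from scratch. The key observation is that the measure-theoretic conclusion $\ga_\om(n)\preceq e^{n^\alpha}$ concerns the growth \emph{function} of the abstract group $G_\om$ (up to the equivalence $\sim$), which depends only on the isomorphism type of the group together with the choice of a finite generating set, and is insensitive to enlarging the generating set by finitely many elements. Thus the entire content reduces to producing, for each $k\geq 2$, a family $(M_\om,L_\om)_{\om\in\Om_p}$ in $\m{M}_k$ whose groups $M_\om$ have growth functions equivalent to those of the $G_\om\in\m{G}_p\subset\m{M}_3$, and for which the correspondence $\om\mapsto(M_\om,L_\om)$ is a homeomorphism from $\Om_p$.

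First I would handle the case $k\geq 3$. For $k=3$ one simply takes $\m{K}_3=\m{G}_p$ with $M_\om=G_\om$ and $L_\om=S_\om$, and the statement is literally Theorem \ref{ana1}(a). For $k>3$, I would set $M_\om=G_\om$ but equip it with the generating set $L_\om=\{a,b_\om,c_\om,1,\dots,1\}$ obtained by padding $S_\om$ with $k-3$ copies of the identity (or, if one insists on genuine generators, by the images of $k-3$ extra free generators mapped trivially). Adjoining finitely many elements to a generating set changes the word metric only up to bi-Lipschitz equivalence, hence leaves the growth function unchanged up to $\sim$; so $\ga_{M_\om}\sim\ga_{G_\om}\preceq e^{n^\alpha}$ for $\mu$-a.e.\ $\om$. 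The map $\om\mapsto(M_\om,L_\om)$ is continuous because the relators of bounded length in the padded marking are determined by the relators of $G_\om$ together with the fixed relations saying that the extra generators are trivial, and the former vary continuously in $\om$ by the defining property of $\m{G}_p$; injectivity is inherited from $\m{G}_p\cong\Om_p$, and continuity of a bijection of compacta gives a homeomorphism onto its image $\m{K}_k$, which is compact hence closed.

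The case $k=2$ is the one requiring an actual idea rather than bookkeeping, and I expect it to be the main obstacle, since the groups $G_\om$ are genuinely $3$-generated. The plan is to use the natural embedding $\m{M}_k\hookrightarrow\m{M}_{k+1}$ in reverse: one encodes a $3$-generated group as a $2$-generated one by an \emph{amalgamation-free} thickening that does not disturb growth. Concretely, I would realize $M_\om$ as a suitable extension or subgroup in which two generators $x_\om,y_\om$ suffice to generate a group commensurable with $G_\om$ -- for instance, forming $G_\om\ast_{\text{f.i.}}$ with a fixed finite or virtually-cyclic piece chosen so that the resulting group is $2$-generated and abstractly commensurable with $G_\om$, whence its growth is $\sim\ga_{G_\om}$ by the standard fact that commensurable groups have equivalent growth. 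The delicate points are (i) ensuring the $2$-generation can be arranged uniformly in $\om$ so that $\om\mapsto(M_\om,L_\om)$ is still continuous and injective with Cantor image, and (ii) verifying that the auxiliary piece does not raise the growth above $e^{n^\alpha}$, which holds as long as it is added with controlled (at most polynomial or exponential-of-lower-order) contribution -- a finite or polycyclic piece suffices. Once a uniform continuous $2$-generated model with growth $\sim\ga_{G_\om}$ is in hand, the measure-theoretic conclusion transfers verbatim from Theorem \ref{ana1}(a), completing all cases.
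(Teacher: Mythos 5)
Your reduction for $k\geq 3$ is sound and is essentially what the paper does: for $k=3$ the statement is Theorem~\ref{ana1} itself (the paper also notes $d_\om=b_\om c_\om$ to pass between the generating sets $A_\om$ and $S_\om$), and for $k>3$ padding the marking with trivial generators changes neither the growth class nor the topology of the family; the paper formalizes the change-of-marking step via Nielsen transformations and the embeddings $\iota_{k,n}$, but your continuity argument amounts to the same thing.

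The genuine gap is the case $k=2$, which you correctly identify as the crux but do not actually resolve. Your proposed device --- ``forming $G_\om\ast_{\text{f.i.}}$ with a fixed finite or virtually-cyclic piece'' so that the result is $2$-generated and commensurable with $G_\om$ --- does not work as stated: nontrivial free or amalgamated products over finite (or finite-index) subgroups generically contain nonabelian free subgroups and have exponential growth, so any such ``thickening'' destroys the bound $e^{n^\alpha}$ rather than preserving it; and no construction is specified for which $2$-generation, commensurability with $G_\om$, and uniformity in $\om$ are all verified. The paper's proof supplies the missing idea (taken from \cite{grigorch:degrees}): one lets $M_\om=\langle x,y_\om\rangle$ act on the rooted tree with branch index $4,2,2,\dots$, where $x$ cyclically permutes the four first-level subtrees and $y_\om=(b_\om,c_\om,a,1)$, so that the subgroup $\bar M_\om=\langle y_\om,xy_\om x^{-1},x^2y_\om x^{-2},x^3y_\om x^{-3}\rangle$ has index $4$ in $M_\om$ and is a subdirect product inside $G_\om^4$; this forces $\ga_{M_\om}\preceq\ga_{G_\om^4}\sim\ga_{G_\om}^4\preceq e^{n^\alpha}$, and in fact $M_\om$ is abstractly commensurable with $G_\om^4$ for $\om\in\Om_{p,\infty}$. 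Your delicate point (i) --- continuity and injectivity of $\om\mapsto(M_\om,L_\om)$ --- is also not addressed in your sketch; the paper handles it by adapting the branch algorithm for the word problem to $M_\om$ (with covering group $\bZ_4\ast\bZ_2$), which shows that sequences sharing a prefix of length $n$ yield isomorphic balls of radius $2^{n-1}$ in the Cayley graphs, whence the closure of $\{(M_\om,L_\om)\}$ in $\m{M}_2$ is the required Cantor set. Without a concrete growth-preserving $2$-generated model and this local-isomorphism argument, the $k=2$ case remains unproved.
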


For $k\geq 3$, the group $M_\om$ is the same as $G_\om$, with an appropriate generating set $L_\om$ of size $k$.  
For $k=2$, $M_\om$ is a 2-generated group constructed from $G_\om$
using an idea from \cite{grigorch:degrees} (see Section \ref{primetheorems}). 

\medskip

The proof of Theorem \ref{ana1} is based on the next result,  which has its own interest.
It improves \cite[Theorem 3]{grigorch:degrees85} (a generalization of 
\cite[Theorem 3.2]{grigorch:degrees}), which states that  
if the sequence $\om$ is regularly packed by symbols $0,1,2$, namely, if there exists $k=k(\om)$
such that each subsequence of length $k$ of $\om$ contains all symbols $\{0,1,\ldots,p\}$,
then  there is  $\alpha <1$ such that $\gamma_\om(n) \preceq e^{n^\alpha}$.

To every
infinite word $\om=l_1l_2\dots$ in $\Om_{p,\infty}$ we associate an increasing
sequence of integers $t_i=t_i(\om)$, $i=0,1,2,\dots$ Namely, $t_i$ is the smallest
integer such that the finite word $l_1l_2\ldots l_{t_i}$ can be split into $i$ subwords each
containing all letters $\{0,1,\ldots,p\}$.
For any $C\geq p+1$ let $\Om_{p,C}$ denote the set of all infinite words
$\om\in\Om_{p,\infty}$ such that $t_n(\om)\le Cn$ for sufficiently large $n$.
Given $\eps>0$, let $\Om_{p,C,\eps}$ denote the set of all $\om\in\Om_{p,C}$ such
that $t_{n+1}(\om)-t_n(\om)\le\eps t_n(\om)$ for sufficiently large $n$.

\begin{thm}
\label{important4}
Given $C\geq p+1$, there exist $\eps>0$ and $0<\alpha<1$ such that $\ga_\om(n )\preceq
e^{n^\alpha}$ for any $\om\in\Om_{p,C,\eps}$.
\end{thm}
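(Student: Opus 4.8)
The plan is to control the growth by iterating the self-similar structure that underlies the commensurability $G_\om\sim (G_{\tau\om})^p$, reading off one complete block $l_1\dots l_{t_1}$ of $\om$ at a time. Writing $g_\om(n)=\log\ga_\om(n)$, I would first record the \emph{block contraction estimate}: there are constants $\eta=\eta(p)<1$ and $D=D(p)$ such that every element of length $\le n$ in $G_\om$ is determined by its action on the first $t_1(\om)$ levels of the tree (at most $D^{p^{t_1}}$ possibilities) together with its $p^{t_1}$ level-$t_1$ sections, which lie in $G_{\tau^{t_1}\om}$ and whose total length is at most $\eta n+O(t_1)$. This is exactly the mechanism behind \cite[Theorem 3]{grigorch:degrees85} for regularly packed sequences: one full appearance of all symbols $\{0,\dots,p\}$ forces a definite proportional contraction of word length. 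Using concavity of $m\mapsto g_{\tau^{t_1}\om}(m)$ (valid in the intermediate-growth regime) together with Jensen's inequality to distribute the contracted length among the $p^{t_1}$ sections, I obtain the recursion
\begin{equation*}
g_\om(n)\ \le\ p^{t_1}\log D\ +\ p^{t_1}\,g_{\tau^{t_1}\om}\!\Big(\tfrac{\eta n}{p^{t_1}}+O(t_1\,p^{-t_1})\Big).
\end{equation*}

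Next I would iterate this recursion across the first $k$ blocks, peeling off $t_1,\,t_2-t_1,\dots$ successively; since the defining conditions of $\Om_{p,C,\eps}$ are tail conditions, the shifted words $\tau^{t_i}\om$ remain in the class and the recursion applies at each stage. Unrolling $k$ steps and inserting the ansatz $g_{\tau^{t_k}\om}(m)\le K m^{\al}$ at the base, the leading term becomes $K\,\eta^{\al k}\,n^{\al}\,(p^{t_k})^{1-\al}$, so self-consistency holds as soon as $\eta^{\al k}(p^{t_k})^{1-\al}\le 1$, i.e.
\begin{equation*}
\al\ \ge\ \frac{t_k\log p}{\,t_k\log p+k\log(1/\eta)\,}.
\end{equation*}
The crucial point is that the total contraction across $k$ blocks is $\eta^{k}$ while the number of sections is only $p^{t_k}$; the hypothesis $t_k\le Ck$ (bounded \emph{average} block length) therefore keeps the right-hand side below
\begin{equation*}
\al_0\ :=\ \frac{C\log p}{\,C\log p+\log(1/\eta)\,}\ <\ 1 ,
\end{equation*}
which matches, in order, the exponent from the regularly packed case with window $C$. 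Individual blocks may be long; it is only their average length that is constrained.

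The role of the second hypothesis, $t_{n+1}-t_n\le\eps t_n$, is to keep the iteration honest. The recursion uses concavity of $g_{\tau^{t_i}\om}$ at the argument $\eta^{i}n/p^{t_i}$, but $g$ is only concave at scales well above the current block length: at scales comparable to $t_{i+1}-t_i$ the group still grows almost exponentially, and a single oversized block would both violate concavity and make $p^{t_{i+1}}$ overshoot its target in one step, destroying the bound. The condition $t_{n+1}-t_n\le\eps t_n$ caps each block at an $\eps$-fraction of the depth reached so far, so the arguments $\eta^{i}n/p^{t_i}$ descend smoothly and stay in the concave regime until they reach $O(1)$, while the accumulated additive errors $O(t_i)$ and portrait overshoots are bounded by a factor tending to $1$ as $\eps\to0$. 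Choosing $\eps=\eps(C,p)$ small enough then yields the estimate with an exponent $\al=\al_0+O(\eps)<1$, uniformly over $\om\in\Om_{p,C,\eps}$.

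The main obstacle I anticipate is precisely this last bookkeeping: proving that the near-exponential growth concentrated inside the long (but $\eps$-controlled) blocks is genuinely subdominant to the geometric contraction coming from the sequence of complete blocks. Concretely, one must show that the failure of concavity of $g$ at the block scale, and the additive $O(t_i)$ corrections in the length contraction, together contribute only a multiplicative $1+O(\eps)$ to the final exponent rather than accumulating; this is where the interplay between the ``global'' factor-$\eta$ contraction per block and the ``local'' within-block expansion must be quantified, and where the admissible threshold for $\eps$ is extracted. The contraction estimate itself, being a direct consequence of the branch/self-similar structure already used for the regularly packed case, I expect to be routine to adapt.
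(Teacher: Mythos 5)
Your overall architecture --- a fixed proportional length contraction per complete block, iteration across blocks, and the observation that $t_k\le Ck$ converts a geometric gain per block into a power saving with exponent of the form $\frac{C\log p}{C\log p+\log(1/\eta)}$ --- is exactly the skeleton of the paper's proof, and your $\al_0$ has the same shape as the paper's threshold $\al>1-\log(\rho^{-1})/\log(11\cdot 2^{C})$. The genuine gap is the single step you yourself flag as the ``main obstacle'': converting the bound on the \emph{total} section length $\sum_{|w|=t_1}|g_w|\le\eta n+O(2^{t_1})$ into a bound on $\prod_w\ga_{\tau^{t_1}\om}(|g_w|)$. You do this by asserting concavity of $\log\ga_{\tau^{t_1}\om}$ and applying Jensen; but log-concavity of growth functions is not known for these groups and is essentially what is being proved here (a priori the growth is nearly exponential on exactly the scales over which you must average), so the recursion $g_\om(n)\le p^{t_1}\log D+p^{t_1}g_{\tau^{t_1}\om}(\eta n/p^{t_1}+\cdots)$ is unestablished, and the subsequent ``ansatz $g(m)\le Km^{\al}$ at the base'' is circular without a separate bootstrap. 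The paper closes this gap with an elementary counting device instead of convexity: by Markov's inequality applied to $\sum_w|g_w|\le\frac56|g|+\frac76+2^{q-1}$, fewer than $\frac{11}{12}\cdot 2^{q}$ sections can exceed $\frac{12}{11}\cdot 2^{-q}\bigl(\frac56 n+\frac76+2^{q-1}\bigr)$; bounding each long section by $\tilde\ga(x/2^{q})\le\tilde\ga\bigl(x/(11\cdot2^{q})\bigr)^{11}$ and each short one by $\tilde\ga\bigl(\frac{10}{11}x/2^{q}\bigr)\le\tilde\ga\bigl(x/(11\cdot2^{q})\bigr)^{10}$ (pure submultiplicativity) yields
$$\tilde\ga_\om(x)\le 2^{2^{q+1}}\,\tilde\ga_{\tau^{q}\om}\Bigl(\frac{x}{11\cdot 2^{q}}\Bigr)^{\rho\cdot 11\cdot 2^{q}},\qquad \rho=\tfrac{131}{132}<1,$$
and it is this factor $\rho<1$ in the exponent, compounding to $\rho^{m}$ over $m$ blocks, that replaces your Jensen step.

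A smaller correction: in the paper the hypothesis $t_{n+1}-t_n\le\eps t_n$ has nothing to do with staying in a ``concave regime''. The iterated estimate $\ga_\om(x_m)\le 10^{\rho^{m}x_m}$ with $x_m=11^{m}2^{t_m}$ holds for every $\om\in\Om_{p,\infty}$ with no restriction on block lengths; $\eps$ is needed only to interpolate between the checkpoints, i.e.\ to guarantee $x_{m+1}\le 11\,x_m^{1+\eps}$ so that bounding $\ga_\om(n)$ by $\ga_\om(x_{m+1})$ for $x_m\le n\le x_{m+1}$ costs only a factor $1+\eps$ in the exponent. If you rebuild your argument on the counting lemma above --- or, alternatively, unroll the length contraction all the way to level $t_k$ with $k\asymp\log n$ and apply only the trivial exponential bound to the tail group, which likewise avoids any concavity --- the rest of your outline goes through.
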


%
%
%
%
%

Our next result deals exclusively with the case $p=2$ (the reason was explained earlier).
Given two functions $\gamma_1,\gamma_2:\mathbb{N}\to \mathbb{N}$ such that $\gamma_1(n) \prec \gamma_2(n) \prec e^n$, let us say
that a group $G$ has \emph{oscillating growth of type} $(\gamma_1,\gamma_2)$ if
 $\gamma_1 \npreceq \gamma_G$ and $\gamma_G \npreceq \gamma_2$. The existence of groups with oscillating growth
follows from the results of \cite{grigorch:degrees}. 
The results of \cite{kassabov_pak:11} and of \cite{bart-ersch-permgrow},\cite{brieussel} provide much more
information in this direction.
 
Let $\theta_0=\log(2)/ \log(2/x_0)$, where $x_0$ is the real root of the polynomial
 $x^3+x^2+x-2$. We have $\theta_0 < 0.767429$. 
\begin{thm}\mbox{}
 \label{main2}
\begin{itemize}
 \item[a)]For any $\theta >\theta_0$ and any function $f$ satisfying 
 $e^{n^\theta}\prec f(n) \prec e^n$, there exists 
a dense $G_\delta$ subset $\m{Z} \subset\m{G}_2$
 such that any group in $\m{Z}$ has oscillating growth of 
 type $\left(e^{n^\theta},f\right)$.
 \item[b)] There exists a dense $G_\delta$ subset of $\m{G}_2$ which consists of
 groups with  oscillating growth of type $\left(e^{n^\theta},e^{n^\beta}\right)$ 
 for every  $\theta$ and $\beta$, $\theta_0<\theta<\beta<1$.
\item[c)] Given any $\eps>0$ and  function $f$ satisfying 
$\exp\left(\frac{n}{\log^{1-\epsilon}n}\right) \prec f(n) \prec e^n$, there is a dense $G_\delta$ 
subset $\m{E}\subset \{(G_\om,S_\om) \mid \om \in \{0,1\}^\mathbb{N}\}$
such that any group in $\m{E}$ has oscillating growth of type 
$\left( \exp\left(\frac{n}{\log^{1-\epsilon}n}\right),f \right)$.
\end{itemize}
\end{thm}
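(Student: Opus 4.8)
The plan is to prove all three parts by a single Baire-category scheme on the Cantor set $\m{G}_2\cong\Om_2$, exploiting two ingredients: the growth functions depend continuously on $\om$, and one can locally force either slow or fast growth by prescribing the letters of a prefix. First I would record the finite-determination principle implied by the stated feature of the construction: since sequences sharing a prefix of length $n$ yield isomorphic Cayley balls of radius $2^{n-1}$, for each fixed radius $r$ the map $\om\mapsto\gamma_\om(r)$ is locally constant on $\Om_2$. Consequently every condition of the form ``$\gamma_\om(r)>v$'' or ``$\gamma_\om(r)<v$'' (for fixed $r,v$) is clopen, and conditions of the form ``$\exists\,n>N:\ \gamma_\om(n)>v(n)$'' are open.

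Next I would translate oscillating growth into a $G_\delta$ condition. Using the convention $f\preceq g\iff\exists C:\ f(n)\le g(Cn)$ eventually, one has $\gamma_\om\npreceq\gamma_2\iff\forall C\,\forall N\,\exists n>N:\ \gamma_\om(n)>\gamma_2(Cn)$ and $\gamma_1\npreceq\gamma_\om\iff\forall C\,\forall N\,\exists m>N:\ \gamma_\om(m)<\gamma_1(m/C)$. Hence the set of groups in $\m{G}_2$ with oscillating growth of type $(\gamma_1,\gamma_2)$ equals
$$\bigcap_{C,N}\bigl\{\om:\exists n>N,\ \gamma_\om(n)>\gamma_2(Cn)\bigr\}\ \cap\ \bigcap_{C,N}\bigl\{\om:\exists m>N,\ \gamma_\om(m)<\gamma_1(m/C)\bigr\},$$
a countable intersection of open sets, hence $G_\delta$. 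It then remains to prove density of each of these open sets, which reduces to a local surgery on prefixes: to meet a given cylinder it suffices to extend its prefix so that a single witness radius realizes the desired inequality, the inequality being determined by the extended finite prefix.

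For the ``fast'' sets I would use that inserting a sufficiently long run of a single symbol into any finite prefix produces a range on which the growth is of nearly exponential type (the eventually-constant limits $G_\om,\ \om\in\Om_{2,0}$, being virtually metabelian of exponential growth); since $\gamma_2\prec e^n$ is strictly subexponential, a long enough run forces $\gamma_\om(n)>\gamma_2(Cn)$ at some $n>N$. For the ``slow'' sets I would invoke the quantitative upper bound underlying Theorem \ref{important4}: appending a long, regularly packed block (a segment of $(012)^\infty$) drives the growth in the corresponding range down to $\preceq e^{n^\alpha}$ with $\alpha$ arbitrarily close to $\theta_0$. Because $\gamma_\om$ is monotone, the delicate point is that at the end of such a block, at radius $m$, the cumulative value $\gamma_\om(m)$ nevertheless falls below $\gamma_1(m/C)=e^{(m/C)^\theta}$; this succeeds precisely because $\alpha$ can be taken strictly below $\theta$, which is exactly the requirement $\theta>\theta_0$. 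This density step — showing a packed block genuinely pulls the monotone growth below the target while absorbing the bounded contribution of the fixed prefix — is the main obstacle, and it is where $\theta_0=\log 2/\log(2/x_0)$ enters, through the length-contraction estimate of the defining substitution with $x_0$ the real root of $x^3+x^2+x-2$.

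Part (a) then follows at once. For (b) I would intersect the dense $G_\delta$ sets from (a) over all rational pairs $\theta_0<\theta<\beta<1$ (taking $f=e^{n^\beta}$); the intersection is again dense $G_\delta$ by Baire, and monotonicity of $\preceq$ in the exponents upgrades the rational statement to all real $\theta,\beta$ in range, since dipping below $e^{n^\theta}$ forces dipping below any $e^{n^{\theta'}}$ with $\theta'>\theta$, and exceeding $e^{n^\beta}$ forces exceeding any $e^{n^{\beta'}}$ with $\beta'<\beta$. For (c) I would run the identical scheme inside $\{(G_\om,S_\om):\om\in\{0,1\}^{\mathbb{N}}\}$, replacing the three-letter packed-block bound by the corresponding estimate for two-letter sequences, which yields the slower envelope $\exp\!\bigl(n/\log^{1-\eps}n\bigr)$ in place of $e^{n^{\theta_0}}$ for the slow part.
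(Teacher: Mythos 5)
Your overall architecture --- express oscillating growth as a countable intersection of open sets using local constancy of $\om\mapsto\gamma_\om(r)$ on cylinders, then prove density --- matches the paper's (this is the content of Lemmas \ref{lower} and \ref{upper}). Your density argument for the ``fast'' open sets is also essentially the paper's: the eventually constant sequences form a dense subset of $\m{G}_2$ consisting of groups of exponential growth, and any group with $\gamma\sim e^n$ satisfies $\gamma\npreceq f$ whenever $f\prec e^n$. Your part (b), intersecting over rational pairs $(\theta,\beta)$ and using monotonicity in the exponents, works; the paper gets the same conclusion slightly more directly by applying part (a) once with the single function $f=e^{n/\log n}$, which dominates every $e^{n^\beta}$, $\beta<1$.

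The genuine gap is in the density of the ``slow'' sets. You propose to append a long \emph{finite} block of $(012)^\infty$ to an arbitrary prefix and invoke ``the quantitative upper bound underlying Theorem \ref{important4}'' to drive the growth down to $e^{n^\alpha}$ with $\alpha$ arbitrarily close to $\theta_0$. That bound cannot do this: Theorem \ref{tek} only yields $\alpha>1-\log(\rho^{-1})/\log(11\cdot 2^C)$ with $\rho=131/132$, which even for the most densely packed sequences is about $0.998$, nowhere near $\theta_0\approx 0.767$; so for $\theta$ in the range $(\theta_0,0.998)$ your argument produces nothing. The bound $e^{n^{\theta_0}}$ is Bartholdi's theorem about the single group $G_{(012)^\infty}$, proved by entirely different (weight-optimization) methods, and it is asymptotic; to use it in a finite-prefix surgery you would need an effective version, uniform over the arbitrary prefix $w$ being extended, telling you at which finite radius $m$ the monotone function $\gamma_\om(m)$ actually falls below $e^{(m/C)^\theta}$ --- and you supply no such estimate. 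The paper sidesteps this entirely: it takes the dense set $\m{X}$ of sequences whose tail is \emph{exactly} $(012)^\infty$; for such $\om$, commensurability of $G_\om$ with $G_{\tau(\om)}\times G_{\tau(\om)}$ gives $\gamma_\om\sim\gamma_{(012)^\infty}^{2^k}\preceq e^{n^{\theta_0}}\prec e^{n^\theta}$ as a full asymptotic statement, hence $\m{X}\subset\m{L}_g$ for $g=e^{n^\theta}$; since $\m{L}_g$ is $G_\delta$ by Lemma \ref{lower} and contains the dense set $\m{X}$, it is automatically a dense $G_\delta$, with no surgery or effective bound required. The same repair is needed for your part (c), where the paper uses the tail $(01)^\infty$ and Erschler's asymptotic bound in the same way.
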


Again, all these results  generalize to arbitrary $k\geq 2$. In particular, the following theorem holds.

\begin{thmtwoprime}
\label{main2prime}
 For each $k\geq 2$, $\theta>\theta_0$ and function $f$ satisfying $e^{n^\theta}\prec f(n)\prec e^n$, $\mathcal{M}_k$ contains a compact subset $\mathcal 
 C_k$ homeomorphic
 to $\Omega_2$, such that there is a dense $G_\delta$ subset $\mathcal{C}'_k\subset \mathcal C_k$
 which consists of groups with oscillating growth of type $\left(e^{n^{\theta}},f\right)$.
\end{thmtwoprime}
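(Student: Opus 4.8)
The plan is to reduce Theorem 3' for general $k$ to the already-established case $k=3$ (which is exactly Theorem \ref{main2}(a) applied to the family $\m{G}_2$), by exhibiting an explicit construction that transplants the Cantor family $\{(G_\om,S_\om)\}$ into $\mathcal{M}_k$ while preserving growth type. First I would handle $k\geq 3$, which is essentially cosmetic: the group $G_\om$ already has a natural generating set $S_\om$ of size three, and one enlarges it to a generating set $L_\om$ of size $k$ by adjoining $k-3$ copies of the identity (or, to keep the map $\om\mapsto (G_\om,L_\om)$ an embedding and the image a Cantor set, $k-3$ suitably chosen redundant generators expressible as words in $S_\om$ of bounded length, uniformly in $\om$). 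Since growth is a quasi-isometry invariant and adding finitely many generators that lie in a bounded ball changes the word metric only up to a multiplicative constant, we have $\gamma_{(M_\om,L_\om)}\sim\gamma_{(G_\om,S_\om)}=\gamma_\om$, so the oscillating-growth property of type $(e^{n^\theta},f)$ transfers verbatim. One must check that the resulting family $\mathcal{C}_k=\{(M_\om,L_\om)\mid\om\in\Om_2\}\subset\mathcal{M}_k$ is compact and homeomorphic to $\Om_2$; this follows because $\om\mapsto (M_\om,L_\om)$ is the composition of the homeomorphism $\om\mapsto(G_\om,S_\om)$ with the continuous (indeed Lipschitz on the marked-group metric) operation of appending fixed bounded-length redundant generators.

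The genuinely new point is the case $k=2$, where one cannot simply pad the generating set. Here the idea from \cite{grigorch:degrees} alluded to in Section \ref{primetheorems} is to build a $2$-generated group $M_\om$ out of the $3$-generated $G_\om$ so that the two are abstractly commensurable, hence of equivalent growth. The plan is to use the standard trick of encoding three generators into two: one takes $M_\om$ to be generated by two elements $x,y$ such that a finite-index subgroup of $M_\om$ (or a quotient, depending on the construction) is isomorphic to $G_\om$, with the three generators $a,b_\om,c_\om$ recovered as words of uniformly bounded length in $x,y$. A clean realization is to pass to an HNN-type or wreath-type extension, or more simply to form an appropriate extension of $G_\om$ by a finite group whose Cayley graph allows two generators to simulate the three original ones; since $G_\om$ acts on a rooted tree, one can arrange $M_\om$ to act on a related tree and remain self-similar. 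The key invariance to verify is that the construction is uniform in $\om$ and continuous: the bounded-length words expressing $a,b_\om,c_\om$ in terms of $x,y$ must not depend on $\om$, so that two sequences $\om,\eta$ agreeing on a long prefix yield groups $M_\om,M_\eta$ whose marked Cayley balls agree on a correspondingly large radius. This gives continuity of $\om\mapsto(M_\om,L_\om)$ and hence a Cantor-set image $\mathcal{C}_2$.

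Once $M_\om$ and $G_\om$ are abstractly commensurable uniformly in $\om$, growth equivalence $\gamma_{M_\om}\sim\gamma_\om$ holds for every $\om$, because commensurable groups have equivalent growth functions with respect to the $\sim$ relation defined in Section \ref{prelim}. Therefore the set of $\om$ for which $M_\om$ has oscillating growth of type $(e^{n^\theta},f)$ coincides with the set of $\om$ for which $G_\om$ has that property. By Theorem \ref{main2}(a) the latter set contains a dense $G_\delta$ subset $\m{Z}\subset\m{G}_2\cong\Om_2$; transporting $\m{Z}$ through the homeomorphism $\om\mapsto(M_\om,L_\om)$ yields the desired dense $G_\delta$ subset $\mathcal{C}'_k\subset\mathcal{C}_k$. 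Here one uses that a homeomorphism carries dense $G_\delta$ sets to dense $G_\delta$ sets, and that density and the $G_\delta$ property are intrinsic to $\mathcal{C}_k$ with its induced topology.

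The main obstacle I expect is making the $k=2$ construction simultaneously (i) genuinely $2$-generated, (ii) abstractly commensurable with $G_\om$ so that growth is preserved exactly, and (iii) \emph{uniform and continuous in $\om$} so that the image is a Cantor set on which the prefix-agreement property survives. The first two requirements together force a fairly rigid construction, and verifying that the commensurability subgroups and the simulating words can be chosen independently of $\om$ — which is what guarantees continuity of the marking map and hence that $\mathcal{C}'_k$ is $G_\delta$ rather than merely dense — is the delicate step; everything else is a routine transfer of the $k=3$ result along a homeomorphism using quasi-isometry invariance of the growth type.
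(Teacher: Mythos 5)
Your treatment of $k\geq 3$ matches the paper: pad $S_\om$ to a generating set of size $k$ and check (via Nielsen transformations, in the paper's Propositions) that this is a homeomorphism onto a Cantor set in $\m{M}_k$, so the dense $G_\delta$ from Theorem \ref{main2} transports over. That part is fine.

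The gap is the case $k=2$, which is the only substantive content of Theorem 3$'$ beyond Theorem \ref{main2}, and which you leave as a wish list rather than a construction. Worse, the candidates you float would not work: HNN-type and wreath-type extensions of $G_\om$ generically have exponential growth, which would destroy the upper bound $\gamma\preceq e^{n^{\theta_0}}\prec e^{n^\theta}$ needed on the dense set $\m{X}$ of preimages of $(012)^\infty$ under powers of the shift; and demanding that $M_\om$ be abstractly commensurable with $G_\om$ itself is more than is needed and more than is actually achieved. The paper's construction is concrete: $M_\om=\langle x,y_\om\rangle$ acts on the spherically homogeneous rooted tree with branch index $4,2,2,\ldots$, where $x$ cyclically permutes the four first-level subtrees and $y_\om=(b_\om,c_\om,a,1)$. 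The subgroup $\bar M_\om=\langle y_\om, xy_\om x^{-1}, x^2y_\om x^{-2}, x^3y_\om x^{-3}\rangle$ has index $4$ and is a subdirect product in $G_\om^4$; hence $\gamma_{M_\om}$ is bounded above by $\gamma_{G_\om^4}\sim\gamma_\om^4$ (and $(e^{n^{\theta_0}})^4\sim e^{n^{\theta_0}}$, so the upper bound on $\m{X}$ survives) and bounded below by $\gamma_\om$ via projection onto a factor (so the limit groups over $\Om_0$ remain of exponential growth, giving the dense set in $\m{U}_f$). Continuity of $\om\mapsto(M_\om,L_\om)$ — the point you correctly flag as delicate — is established not by abstract uniformity of commensurating subgroups but by adapting the branch algorithm for the word problem to $M_\om$ with covering group $\bZ_4\ast\bZ_2$, which shows that sequences agreeing on a prefix of length $n$ yield isomorphic balls of radius $2^{n-1}$. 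Without an explicit construction meeting all three of your requirements simultaneously, the $k=2$ case of the theorem is not proved.
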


The reason why oscillating groups are typical in the categorical sense is the existence
of a countable dense subset in $\m{G}_2$ consisting of (virtually metabelian) groups of
exponential growth and a dense subset of groups with the growth equivalent to the growth
of the \emph{first Grigorchuk group} $G_{(012)^\infty}$, which is bounded by $e^{n^{\theta_0}}$
 due to a result of Bartholdi \cite{bartholdi:growth} (see also \cite{muchnik_p:growth}). Note also that this is the smallest upper bound
 known for any group of intermediate growth. It is used to prove part a).
To prove part c), we use instead a result of Erschler \cite{erschler:boundary04} stating that the growth of 
the group $G_\om$ for $\om=(01)^\infty\in \Om_2$ is slower than
$\exp\left(\frac{n}{\log^{1-\epsilon}n}\right)$ for all $\eps>0$.

Notice that the categorical approach  for study of amenability  of groups from the family $\mathcal G_2$  
was suggested by Stepin in \cite{stepin96}  where the fact  that this family contains a
dense set of virtually metabelian (and hence amenable) groups was used to show that amenability is a typical
property of this family. In fact, all groups in $\mathcal G_2$ are amenable as was shown in \cite{grigorch:degrees}, but
Stepin's paper provided for the first time  a categorical approach to study typical groups in compact subsets
of the space of marked groups.

\section{Preliminaries}
\label{prelim}

\subsection*{Definition of the groups }

The original definition of  groups in \cite{grigorch:degrees,grigorch:degrees85} is in terms of measure preserving
transformations of the unit interval.  We will give here the   alternative definition 
in terms of automorphisms of rooted trees. For the sake of notation 
we will focus on the case $p=2$ and the construction in the 
  case $p\geq3$ is analogous.
 For more detailed account of this construction see \cite{grigorchuk_schur, grigorch:solved}.

Let us recall some notation:  $\Om_2$ denotes the set all infinite sequences over 
the alphabet $\{0,1,2\}$. 
We identify $\Om_2$ with
the product $\{0,1,2\}^{\bN}$ and endow it with the product topology. 
Let $\Om_{2,0}$ be the set of eventually
constant sequences and $\Om_{2,\infty}$ be the set of sequences in which each letter 0,1,2 appears infinitely
often. Our
notation here is different from \cite{grigorch:degrees,grigorch:degrees85}. Let $\tau :\Om_2 \rightarrow \Om_2$ denote the shift transformation, that is if $\om=l_1l_2\ldots$ then
$\tau(\om)=l_2l_3\ldots$. Note that both $\Om_{2,0}$ and $\Om_{2,\infty}$ are $\tau$ invariant. 

For each $\om \in \Omega_2$ we will define a subgroup $G_\om$
of $Aut(\mathcal{T}_2)$, where the latter denotes the automorphism group of the binary
rooted tree $\m{T}_2$ whose vertices are identified with the set of finite sequences $\{0,1\}^\ast$.
Each group $G_\om$ is the subgroup generated by the four automorphisms denoted by
$a,b_\om,c_\om,d_\om$
whose actions onto the tree is as follows:

\smallskip

For  $v\in\{0,1\}^*$
$$a(0v)=1v \;\text{and} \; a(1v)=0v  $$

$$\begin{array}{llllll}
 b_\om(0v)=& 0 \beta(\om_1)(v) &   c_\om(0v)=& 0 \zeta(\om_1)(v)
  &  d_\om(0v)= &0 \delta(\om_1)(v) \\

   b_\om(1v)=& 1 b_{\tau (\om)}(v) &      c_\om(1v)= & 1 c_{\tau \om}(v)
   & d_\om(1v)= & 1 d_{\tau \om}(v), \\

\end{array}
$$
where
$$
\begin{array}{ccc}
 \beta(0)=a & \beta(1)=a & \beta(2)=e \\
  \zeta(0)=a & \zeta(1)=e & \zeta(2)=a \\
   \delta(0)=e & \delta(1)=a & \delta(2)=a \\
\end{array}
$$
and $e$ denotes the identity.  From the definition, the following relations are immediate:
\begin{equation}
 a^2=b_\om^2=c_\om^2=d_\om^2=b_\om c_\om d_\om =e.
 \label{basicrels}
\end{equation}
Observe that the group $G_\om$ is in fact 3-generated  as one of generators  $b_\om, c_\om, d_\om$ can
be deleted from the generating set. We will use the notation $A_\om=\{a,b_\om,c_\om,d_\om\}$ 
for this  generating set while  $S_\om$ will denote the reduced generating set $\{a,b_\om,c_\om\}$ 
(as in Section \ref{sec2}).
Algebraically, the action defines an embedding into the semi-direct product

$$
\begin{array}{cccllllr}
 \varphi_\om: & G_\om & \rightarrow &S_2  & \ltimes &  (G_{\tau (\om)}& \times& G_{\tau (\om)} ) \\
& & & & & & & \\
& a & \mapsto &(01)& &(e &,&e) \\
&  b_\om & \mapsto & & &(\beta(\om_1) & , & b_{\tau(\om)})   \\
 &  c_\om & \mapsto & & &(\zeta(\om_1)& , & c_{\tau(\om)})  \\
 &  d_\om & \mapsto & & &(\delta(\om_1)& , & d_{\tau(\om)})  \\
\end{array}
$$
where $S_2$ is the symmetric group of order 2 and $(01)$ denotes its non-identity element.

Given $g \in G_\om $ and $x\in \{0,1\}$ let us denote the $x$ coordinate
of $\varphi_\om(g)$ by $g_x$ (or by $g|_x$ to avoid possible confusion) so that $\varphi_\om(g)=\sigma_g (g_0,g_1)$. Let us also extend this to all $ \{0,1\}^*$ by
$$g_{xv}=(g_x)_v$$ where $x\in \{0,1\}$ and $v \in \{0,1\}^*$. For $g\in G_\om$ and $v\in\{0,1\}^\ast$ 
the automorphism $g_v$ will be called \textit{the section of $g$ at vertex $v$}.
Note that if $v$ has length $n$ and $g\in G_\om$, then $g_v$
is an element of $G_{\tau^n(\om)}$. Given $g,h\in G_\om$ and $v\in \{0,1\}^*$ we have
\begin{equation}
 (gh)_v=g_{h(v)}h_v
 \label{sect}
\end{equation}

\subsection*{Topology in the space of marked groups}

A \textit{marked k-generated group} is a pair $(G,S)$ where $G$ is a
group and $S=\{s_1,\ldots,s_k\}$ is an ordered set of (not necessarily distinct) generators of $G$.
The \textit{canonical map} between two marked k-generated groups $(G,S)$ and $(H,T)$ is the
map that sends $s_i$ to $t_i$ for $i=1,2,\ldots,k$.
Let $\mathcal M_k$ denote the  \textit{space of marked k-generated groups} consisting
of marked k-generated groups where two marked groups are identified whenever the canonical map
between them extends to an isomorphism of the groups.

There is a natural metric on $\mathcal M_k$: Two marked groups $(G,S),(H,T)$ are of distance
$\frac{1}{2^m}$ where $m$ is the largest natural number such that the canonical map between
$(G,S)$ and $(H,T)$ extends to an isomorphism (of labeled graphs) 
from the ball of radius $m$ (around the identity)  in the Cayley graph of $(G,S)$ onto the ball of
radius $m$  in the Cayley graph  of $(H,T)$. 
This makes $M_k$ into a compact, totally disconnected topological space.

Alternatively, let $F_k$ be free over the ordered basis $X=\{x_1,\ldots,x_k\}$ and let $\mathcal N(F_k)$
denote the set of normal subgroups of $F_k$. $\mathcal N(F_k)$ has a natural topology
 inherited from the space $\{0,1\}^{F_k}$ of all subsets of $F_k$.
$\mathcal M_k$ can be identified with $\mathcal N(F_k)$ in the following 
way: Each $(G,S)\in \mathcal M_k$ is identified with the kernel of the canonical map 
between $(F_k,X)$ and $(G,S)$. Conversely each 
$N\vartriangleleft F_k$ is identified with $(F_k / N, \{\bar x_1,\ldots,\bar x_k\})$
where $\{\bar x_1,\ldots,\bar x_k\}$ is the image of the basis of $F_k$ in $F_k / N$.
A system of basic open sets are sets of the form
$\mathcal O_{A,B}=\{N \vartriangleleft F_k \mid A \subset N, B\cap N=\varnothing\}$
where $A$ and $B$ are finite subsets of $F_k$. Or the topology can be defined by the metric 
$d(N_1,N_2)=2^{-m}$ where $m=\max\{n \mid B_{F_k}(n)\cap N_1 = B_{F_k}\cap N_2\}$.
It is easy to see that the topology defined in this way  agree
with the definition given in the previous paragraph
(see \cite{champetier_guirardel:limit}
for a survey of  alternative definitions).

Let $A_\om=\{a,b_\om,c_\om,d_\om\}$ so that  $\mathcal F_2=\{(G_\om,A_\om) \mid \om \in \Omega_2\}$ 
is a subset of $\mathcal M_4$. $\mathcal F_2$ is not closed in $\mathcal M_4$ 
(see \cite{grigorch:degrees}). Given  $\om \in \Om_{2}$, let $\{\om^{(n)}\}\subset 
\Om_{2}\setminus \Om_{2,0}$
be a sequence converging to $\om$. It was shown in \cite{grigorch:degrees} 
that the sequence $\{(G_{\om^{(n)}},A_{\om^{(n)}})\}$ converges in $\m{M}_4$ to a marked group
$(\widetilde G_\om,\widetilde A_\om)$ that depends only on $\om$. Moreover, 
$(\widetilde G_\om,\widetilde A_\om)= (G_\om,A_\om)$ if and only if $\om \in \Om_{2}\setminus \Om_{2,0}$.
By construction, the group $\widetilde G_\om$ acts naturally on the binary rooted tree for any $\om \in \Om_2$. 
However the action is not faithful 
 when $\om \in \Om_{2,0}$. 
The modified
 family $ \{(\widetilde G_\om,\widetilde A_\om) \mid \om \in \Omega_2\}$  is a compact subset of $\mathcal M_4$
homeomorphic to $\Omega_2$ via the map $\widetilde G_\om \mapsto \om$.

Observe that a similar procedure can be applied to the family 
$\m{G}_2=\{(G_\om,S_\om) \mid \om \in \Om_2\}$ to obtain a closed subset $\{(\widetilde G_\om,\widetilde S_\om) \mid \om \in \Om_2\}$
 in $\m{M}_3$.
In what follows we will mostly  be concerned with the modified groups. Therefore, 
we use notation $\m{F}_2$ and $\m{G}_2$ for the modified families and also drop all tildes.

\subsection*{Growth functions of groups} Given a  group $G$ and a
finite generating set $S$ of $G$, the growth function  of $G$ with respect
to $S$  is defined as $\gamma_G^S(n)=|B(n)|$ where $B(n)$ is the ball of radius $n$
around the identity in the Cayley graph of
$G$ with respect to the generating set $S$.

Given two increasing functions $f,g :\bN \rightarrow \bN$, write $f\preceq g$ if there exists
a constant $C>0$ such that $f(n)\leq g(Cn)$ for all $n \in \bN$. Also let
$f \sim g$ mean that $f\preceq g$ and $g \preceq f$ with the  convention that 
$f \prec g$ meaning $f \preceq g$ but $f\nsim g $. It can be easily observed that
$\sim$ is an equivalence relation and the growth functions of a group with respect to different
generating sets are $\sim$ equivalent. Therefore one can speak of the growth of a group
meaning the $\sim$ equivalence class of its growth functions.
Note that if two groups $(G,S),(H,T) \in \mathcal M_k$  are of distance $2^{-m}$ , then
$\gamma_G^S(n)=\gamma_H^T(n)$ for $n\leq m$.

If $G$ is an \emph{infinite} group and $H$ a subgroup of finite index, then the
growth functions of $G$ and $H$ are $\sim$ equivalent 
by \cite[Proposition 3.1]{grigorch:degrees} (note that this is not true if $G$ is a finite group).
Therefore if two finitely generated infinite groups $G_1$ and $G_2$ are commensurable (i.e., have finite
index subgroups $H_1,H_2$ which are isomorphic) then their growth functions are $\sim$ 
equivalent.

There are three types of growth for groups: If $\gamma_G \preceq n^d$ for some $d\geq 0$ then
$G$ is said to be of polynomial growth, if $\gamma_G \sim e^n$ then it is said to have
exponential growth. If neither of this happens then the group is said to have
\textit{intermediate growth}. Also the condition $\gamma_G \prec e^n$ means that $G$
has \emph{subexponential} growth.

\begin{defn}\label{oscillating}

Let $G$ be a finitely generated group with  growth function 
$\gamma_G$ corresponding to some generating set. Let $\gamma_1,\gamma_2$ be two
functions such that $\gamma_1(n) \prec \gamma_2(n) \prec e^n$. $G$ is said to 
have \emph{oscillating growth of type} $(\gamma_1,\gamma_2)$ if $\gamma_1 \npreceq \gamma_G$
 and $\gamma_G \npreceq \gamma_2$ (i.e., neither 
 $\gamma_1 \preceq \gamma_G $ nor $\gamma_G \preceq \gamma_2$).

%
%
%
%
\end{defn}
Equivalently, the group $G$ has oscillating growth of type  $(\gamma_1,\gamma_2)$
if for some (and hence for all ) generating set $S$ we have the following:
For every $C\in \mathbb{N}$ there exists $m=m(C)$ such that $\gamma_G^S(Cm)<\gamma_1(m)$
and for every $D\in \mathbb{N}$ there exists $k=k(D)$ such that $\gamma_2(Dk)<\gamma_G^S(k)$.


 Regarding the growth of the groups 
 $\mathcal G_p$ the following are known (recall that $\gamma_\om(n)$ denotes the growth
 function of $G_\om$ and when $\om \in \Om_{p,0}$, $G_\om$ denotes the limit group 
 obtained by the procedure described above).

\begin{thm}\mbox{}

\begin{enumerate}
 \item If $\om\in \Omega_2 \setminus \Om_{2,0}$ or $\om \in \Om_{p,\infty}$ if $p\geq 3$,  then $G_\om$ is of intermediate growth.
 \item If $\om \in \Om_{2,0}$ then $G_\om$ is of exponential growth.
  \item For every $\om \in \Omega_2$ or $\om \in \Om_{p,\infty},
  \;p\geq 3$, we have  $e^{\sqrt{n}} \preceq \gamma_\om(n)$.

 \item If there exists a number $r$ such that
 every subword of $\om$ of length $r$ contains all the symbols $\{0,1,\ldots,p\}$ then $\gamma_\om(n) \preceq e^{n^\alpha}$ for some
 $0<\alpha<1$ depending only on $r$.

 \item There  is a subset $\Lambda \subset \Omega_2$ of the cardinality of continuum  such that the
 functions $\{\gamma_\om(n) \mid \om \in \Lambda\}$ are incomparable with respect to $\preceq$.
 
 \item For any function $f(n)$ such that $f(n) \prec e^n$, there exists $\omega \in \Omega_2\setminus \Om_{2,0}$
for which $\gamma_\om(n) \npreceq f(n)$.

 \item If $\om=(012)^\infty\in \Om_2$ is the periodic sequence with period $012$ then 
 $e^{n^{\alpha_0}} \preceq \gamma_\om(n) \preceq e^{n^{\theta_0}}$,
 where $\alpha_0=0.5157,\theta_0=\log(2)/ \log(2/x_0)$ and $x_0$ is the real root of the polynomial
 $x^3+x^2+x-2$ ($\theta_0\approx 0.7674$).
 \item If $\om=(01)^\infty\in \Om_2$ is periodic with period $01$ then $\exp\left(\frac{n}{\log^{2+\epsilon}n}\right)\preceq \gamma_\om(n)
\preceq \exp\left(\frac{n}{\log^{1-\epsilon}n}\right)$ for any $\epsilon>0$.
\end{enumerate}
\label{general}

\end{thm}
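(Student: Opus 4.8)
The plan is to recognize that Theorem~\ref{general} is a compilation of facts already established in the literature, so the proof consists of attributing each item to its source and checking that the transfer to the present setting is routine. The two invariance facts recorded in the Preliminaries make every such transfer automatic: growth is a $\sim$-invariant of the group, independent of the finite generating set, and commensurable infinite groups have $\sim$-equivalent growth by \cite[Proposition~3.1]{grigorch:degrees}. Consequently it suffices to cite the relevant statements for $(G_\om,A_\om)$ (or for the unmodified groups of \cite{grigorch:degrees,grigorch:degrees85}) and then invoke these invariances to pass to the reduced marked groups $(G_\om,S_\om)$ and to the limit groups attached to $\om\in\Om_{p,0}$.

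Items (1)--(4) come from the foundational papers \cite{grigorch:degrees} (for $p=2$) and \cite{grigorch:degrees85} (for $p\geq3$). First I would recall the recursive embedding $\varphi_\om\colon G_\om\hookrightarrow S_2\ltimes(G_{\tau(\om)}\times G_{\tau(\om)})$: an element of word length $n$ has first-level sections of total length comparable to $n$, and under the packing hypothesis of item~(4) a definite contraction of length by a factor strictly below $1$ occurs after a bounded number of levels; iterating this contraction yields $\ga_\om(n)\preceq e^{n^\alpha}$ with $\alpha<1$ determined by the contraction ratio, which is exactly \cite[Theorem~3.2]{grigorch:degrees} and \cite[Theorem~3]{grigorch:degrees85}. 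The universal lower bound $e^{\sqrt n}\preceq\ga_\om(n)$ of item~(3) is obtained by counting along the same self-similar structure in the opposite direction (cf.\ \cite{grigorch:hilbert}); combined with (4) it gives the intermediate growth asserted in item~(1), while item~(2) is the separate statement, proved in \cite{grigorch:degrees}, that the limit groups $G_\om$ with $\om\in\Om_{2,0}$ are virtually metabelian and hence of exponential growth.

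Items (5) and (6) are the oscillation phenomena of \cite{grigorch:degrees}. Here I would use sequences $\om$ containing arbitrarily long constant runs: along a constant block the rewriting degenerates and the local growth of $G_\om$ behaves exponentially, whereas between blocks the packing forces intermediate behavior. Tuning the lengths and positions of the blocks produces, for item~(6), a single $\om\in\Om_2\setminus\Om_{2,0}$ whose growth exceeds any prescribed subexponential $f$, and, for item~(5), an uncountable antichain $\Lambda$ carried by a Cantor set of such sequences with mutually $\preceq$-incomparable growth. Since this is precisely the construction carried out in \cite{grigorch:degrees}, the step reduces to citing it. Finally, items (7) and (8) are the sharp two-sided estimates for the distinguished periodic sequences: for $\om=(012)^\infty$ (regularly packed with $r=3$, so that item~(4) already yields \emph{some} exponent $\alpha<1$) the precise upper bound $\ga_\om(n)\preceq e^{n^{\theta_0}}$ with $\theta_0=\log2/\log(2/x_0)$ is Bartholdi's theorem \cite{bartholdi:growth} (see also \cite{muchnik_p:growth}), with the lower bound of exponent $\alpha_0=0.5157$ coming from the same line of growth computations, and for $\om=(01)^\infty$ the estimate between $\exp(n/\log^{2+\eps}n)$ and $\exp(n/\log^{1-\eps}n)$ is Erschler's result \cite{erschler:boundary04}. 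I expect no genuine mathematical obstacle in assembling the statement, since every clause is a direct citation; the only point demanding care is that the quoted bounds were proved for the original groups and generating sets, so I would make explicit that each one descends to the present $(G_\om,S_\om)$ through the generating-set and commensurability invariance of $\sim$ noted at the outset.
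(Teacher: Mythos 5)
Your proposal is correct and takes essentially the same approach as the paper: the paper's proof is likewise a list of citations (items (1)--(6) to \cite{grigorch:degrees} and \cite{grigorch:degrees85}, item (7) to \cite{bartholdi:lower} and \cite{bartholdi:growth}, item (8) to \cite{erschler:boundary04}), relying on the generating-set and commensurability invariance of growth recorded in the Preliminaries. The only nuance worth noting is item (3): the paper points out that the original papers prove the $e^{\sqrt n}$ bound only for a subset of sequences, and the general case follows from \cite{grigorch:hilbert} because the groups are residually finite $p$-groups that are not virtually nilpotent (being periodic) --- a justification you cite but describe somewhat differently.
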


\begin{proof}\mbox{}
 \begin{enumerate}
  \item See \cite[Theorem 3.1]{grigorch:degrees} and \cite{grigorch:degrees85}.
  \item See \cite[Lemma 6.1]{grigorch:degrees}.
  \item  See \cite[Theorem 3.2]{grigorch:degrees} and \cite[Theorem 4.4]{grigorch:degrees85} 
  where the lower bound $e^{\sqrt{n}}$ is proven for a certain subset of $\Om_2$ and for 
  $\Om_{p,\infty},\;p\geq 3$. As all groups mentioned are residually finite $p$-groups
  for some prime $p$ and are not virtually nilpotent (which can be shown in various ways, for 
  example using the fact that the groups are periodic),
  the lower bound $e^{\sqrt{n}}$ follows from a general result of
  \cite{grigorch:hilbert}.
  \item See \cite{grigorch:degrees} and  \cite{bartholdi_s:growth}, \cite{muchnik_p:growth} for explicit upper bounds
  depending on $r$.
  \item See \cite[Theorem 7.2]{grigorch:degrees}.
  \item See \cite[Theorem 7.1]{grigorch:degrees}.
  \item  See \cite{bartholdi:lower} for the lower bound 
   which improved upon \cite{leonov:bound}. See  \cite{bartholdi:growth} for the
      upper bound.
  \item See \cite{erschler:boundary04}.
 \end{enumerate}

\end{proof}

\section{Proof of  Theorem \ref{ana1}}

This section is devoted to the proof of Theorems \ref{ana1} and \ref{important4}.
We prove these theorems in the case $p=2$. The proof in the case $p\geq 3$ is completely analogous.
To simplify notation, we set $\Om=\Om_2$ and $\Om_\infty=\Om_{2,\infty}$ for the rest of this section.

For any element $g$ of a group
$G_\om$, $\om\in\Om$, we denote by $|g|$ its length relative to the
generating set $A_\om=\{a,b_\om,c_\om,d_\om\}$.  If $|g|=n$, then $g$ can
be expanded into a product $s_1s_2\dots s_n$, called a geodesic representation, 
where each $s_i\in A_\om$.
For every generator $s\in A_\om$ we denote by $|g|_s$ the number of times
this generator occurs in the sequence $s_1,s_2,\dots,s_n$.  
Note that the element $g$ may admit several geodesic representations and $|g|_s$
may depend on a representation (for example, $b_\om a d_\om a b_\om = c_\om a d_\om a c_\om$
for any $\om$ starting with $0$). Lemmas \ref{l-0} and \ref{l-3} below hold for any 
possible value of the corresponding number $|g|_s$.

\begin{lem}\label{l-0}

$(|g|-1)/2\le|g|_a\le(|g|+1)/2$ for all $g\in G_\om$.

\end{lem}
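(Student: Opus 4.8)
The plan is to exploit the relations \eqref{basicrels} to show that any geodesic word for $g$ must alternate between the letter $a$ and the letters $\{b_\om,c_\om,d_\om\}$, after which the two inequalities follow from an elementary parity count. First I would record the algebraic structure hidden in \eqref{basicrels}: each of $a,b_\om,c_\om,d_\om$ is an involution, and the relation $b_\om c_\om d_\om=e$ together with $b_\om^2=c_\om^2=d_\om^2=e$ shows that $\{e,b_\om,c_\om,d_\om\}$ is closed under multiplication and forms a Klein four-group; in particular, the product of any two \emph{distinct} elements of $\{b_\om,c_\om,d_\om\}$ is again the third one.

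Next, fix a geodesic representation $g=s_1s_2\cdots s_n$ with $n=|g|$ and each $s_i\in A_\om$. The core step is to show that no two consecutive letters $s_i,s_{i+1}$ lie in the same block, where the two blocks are $\{a\}$ and $\{b_\om,c_\om,d_\om\}$. Indeed, if $s_i=s_{i+1}=a$, then $s_is_{i+1}=e$, and deleting this pair produces a word of length $n-2$ equal to $g$, contradicting geodesity. If instead both $s_i,s_{i+1}\in\{b_\om,c_\om,d_\om\}$, then by the Klein four-group structure their product is either $e$ (when they coincide) or a single element of $\{b_\om,c_\om,d_\om\}$ (when they differ), so in either case this pair can be replaced by at most one letter, again shortening the word. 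Hence the types of $s_1,\dots,s_n$ strictly alternate between $a$ and $\{b_\om,c_\om,d_\om\}$.

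Finally, in any strictly alternating sequence of length $n$, the number of letters of either type differs from $n/2$ by at most $1/2$: if $n$ is even both counts equal $n/2$, while if $n$ is odd one count is $(n+1)/2$ and the other $(n-1)/2$. Applying this to the occurrences of $a$ yields $(|g|-1)/2\le|g|_a\le(|g|+1)/2$, as claimed; since the argument applies verbatim to any geodesic representation, the bound holds for every admissible value of $|g|_a$, in accordance with the remark preceding the lemma. The only delicate point is the shortening argument, where one must invoke the Klein four-group relation $b_\om c_\om d_\om=e$, and not merely the involution relations, to rule out two consecutive letters drawn from $\{b_\om,c_\om,d_\om\}$; the rest is a routine counting observation.
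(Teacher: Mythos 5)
Your proof is correct and follows essentially the same route as the paper: the paper's own argument also deduces from the relations \eqref{basicrels} that every geodesic representation has the alternating form $(s_1)as_2a\ldots a(s_k)$ with $s_i\in\{b_\om,c_\om,d_\om\}$, and then concludes by the same parity count. You have merely filled in the details the paper leaves implicit, namely the Klein four-group structure of $\{e,b_\om,c_\om,d_\om\}$ that rules out two consecutive non-$a$ letters.
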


\begin{proof}
It follows from relations (\ref{basicrels}) that
 any geodesic representation of an element 
 $g\in G_\om$ is  of the form
 $$g=(s_1)as_2a\ldots a(s_k),$$ where each $s_i\in \{b_\om,c_\om,d_\om\}$ and  parentheses indicate 
optional factors. The lemma follows.

\end{proof}

\begin{lem}\label{l-1}

For any word $w\in\{0,1\}^*$ of length $q$ we have  $|g_w|\le 2^{-q}|g|+1-2^{-q}$.

\end{lem}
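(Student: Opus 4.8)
The plan is to first establish the case $q=1$ — a bound on the length of the two sections $g_0$ and $g_1$ at the children of the root — and then deduce the general statement by iterating along the word $w$.

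For the base case I would start from a geodesic representation of $g$ and invoke Lemma \ref{l-0}: any such representation has the form $g=(s_1)as_2a\cdots a(s_k)$ with each $s_i\in\{b_\om,c_\om,d_\om\}$ and the non-$a$ letters separated by single copies of $a$. Writing $m$ for the number of non-$a$ letters, the count in Lemma \ref{l-0} gives $m=|g|-|g|_a\le(|g|+1)/2$. I would then read off the two sections by pushing the word through $\varphi_\om$. Since $\varphi_\om(a)=((01);e,e)$ has trivial sections but swaps the two subtrees, while each of $\varphi_\om(b_\om),\varphi_\om(c_\om),\varphi_\om(d_\om)$ has trivial root permutation and contributes one ``major'' generator (one of $b_{\tau(\om)},c_{\tau(\om)},d_{\tau(\om)}$, of length $1$) to one child together with one ``minor'' letter ($\beta(\om_1),\zeta(\om_1)$ or $\delta(\om_1)$, which is $a$ or $e$, hence of length $\le 1$) to the other, every non-$a$ letter of the word contributes a subword of length at most $1$ to each of $g_0$ and $g_1$; the intervening $a$'s only toggle which child receives the major piece and which the minor one. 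Consequently the representation of $g_x$ produced this way has length at most $m$, so $|g_x|\le m\le(|g|+1)/2=2^{-1}|g|+1-2^{-1}$, which is exactly the claim for $q=1$.

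For the inductive step I would use the section identity $g_{xv}=(g_x)_v$ together with the fact that $g\in G_\om$ and $x\in\{0,1\}$ imply $g_x\in G_{\tau(\om)}$. Writing $w=xu$ with $|u|=q-1$, we have $g_w=(g_x)_u$, and the inductive hypothesis applied to $g_x\in G_{\tau(\om)}$ gives $|g_w|\le 2^{-(q-1)}|g_x|+1-2^{-(q-1)}$. Substituting the $q=1$ bound $|g_x|\le\tfrac12|g|+\tfrac12$ and simplifying yields
\[
|g_w|\le 2^{-(q-1)}\Bigl(\tfrac12|g|+\tfrac12\Bigr)+1-2^{-(q-1)}=2^{-q}|g|+1-2^{-q},
\]
which completes the induction, the base case $q=0$ being trivial since $g_\varnothing=g$.

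The only real content lies in the base case, and the step that warrants the most care is verifying that each non-$a$ generator genuinely adds at most one letter to \emph{each} section regardless of the parity of the preceding $a$'s; everything else is bookkeeping. I would also remark that the estimate requires only an upper bound, so it is harmless that the word obtained for $g_x$ from a geodesic representation of $g$ need not itself be geodesic.
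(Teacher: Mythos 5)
Your proof is correct and follows essentially the same route as the paper's: both establish the case $q=1$ by writing $g_w$ as the product of the sections of the letters of a geodesic word, noting that the $a$'s contribute trivially so that $|g_w|\le|g|-|g|_a\le(|g|+1)/2$ by Lemma \ref{l-0}, and then iterate via $g_{xu}=(g_x)_u$ to get the bound for general $q$. The only cosmetic difference is that the paper packages the induction as $|g_w|-1\le 2^{-q}(|g|-1)$, which is the same inequality.
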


\begin{proof} First consider the case when $w$ is $0$ or $1$. 
 Let $g=s_1s_2\dots s_n$ be a geodesic representation, where each $s_i\in A_\om$.
It follows by induction from equation (\ref{sect}) that $g_w=s_1|_{w_1}\,s_2|_{w_2}\ldots s_n|_{w_n}$,
where $w_n=w$ and $w_i=(s_{i+1}\ldots s_n) (w)$, for $1 \le i \le n-1$.
Notice that each section $s_i|_{w_i}$ is a generator of the group $G_{\tau(\om)}$ or $e$. Moreover,
$s_i|_{w_i}=e$ if $s_i=a$. Therefore $|g_w|\le |g|-|g|_a$. By Lemma \ref{l-0}, $|g|_a\ge (|g|-1)/2$.
Hence $|g_w| \le (|g|+1)/2$. Equivalently, $|g_w|-1\le 2^{-1}(|g|-1)$.

Now it follows by induction on $|w|$ that $|g_w|-1\le 2^{-q}(|g|-1)$ for any word $w$ of length $q$.

%
\end{proof}

For any element $g\in G_\om$ and any integer $q\ge0$, let

$$L_q(g)=\sum_{|w|=q}|g_w|$$
\begin{lem}\label{l-2}

$L_q(gh)\le L_q(g)+L_q(h)$ for all $g,h\in G_\om$.

\end{lem}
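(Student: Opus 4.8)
The plan is to reduce the inequality to the subadditivity of word length under multiplication, combined with the fact that $h$ acts as a tree automorphism. The starting point is the section formula (\ref{sect}): for any word $w$ of length $q$ we have $(gh)_w=g_{h(w)}\,h_w$. Here both $g_{h(w)}$ and $h_w$ are sections at level-$q$ vertices, hence elements of the same group $G_{\tau^q(\om)}$, so their product is well defined and the ordinary triangle inequality for the word metric applies:
$$|(gh)_w|=|g_{h(w)}\,h_w|\le|g_{h(w)}|+|h_w|.$$

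First I would sum this over all $w\in\{0,1\}^*$ with $|w|=q$ and split the right-hand side into two separate sums. The sum of $|h_w|$ over all level-$q$ words is exactly $L_q(h)$ by definition. For the remaining sum $\sum_{|w|=q}|g_{h(w)}|$, the key observation is that $h$ acts as an automorphism of the binary rooted tree $\m{T}_2$, so it preserves the level of each vertex and therefore restricts to a permutation of the finite set $\{0,1\}^q$ of level-$q$ words. Consequently the reindexing $v=h(w)$ is a bijection of this set onto itself, and
$$\sum_{|w|=q}|g_{h(w)}|=\sum_{|v|=q}|g_v|=L_q(g).$$
Combining the two sums yields $L_q(gh)\le L_q(g)+L_q(h)$, as desired.

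There is essentially no hard obstacle here; the argument is a direct consequence of (\ref{sect}) and subadditivity of length. The only point that must be stated carefully is that the summation index is genuinely permuted rather than merely mapped, which requires invoking that $h$ is a level-preserving tree automorphism and hence acts bijectively on each level. I would make this permutation remark explicit so that the reindexing step is unambiguous.
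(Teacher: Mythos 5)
Your argument is correct and is essentially identical to the paper's proof: both use the section formula $(gh)_w=g_{h(w)}h_w$, the triangle inequality for word length, and the fact that $h$ permutes the level-$q$ vertices to reindex the sum. Your added remark making the bijectivity of the reindexing explicit is the same point the paper invokes, just spelled out in slightly more detail.
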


\begin{proof}
Since $(gh)_w=g_{h(w)}h_w$ for any word $w\in \{0,1\}^\ast$, it follows that $|(gh)_w|\le |g_{h(w)}|+|h_w|$.
Summing this inequality over all words $w$ of length $q$ and using the fact that $h$ acts bijectively on such
words, we obtain $L_q(gh)\le L_q(g)+L_q(h)$.

\end{proof}

\begin{lem}\label{l-3}

$L_q(g)\le |g|+1-|g|_{h_q}$ for any $q\ge1$, where $h_q=b_\om$, $c_\om$, or
$d_\om$ if the $q$-th letter of $\om$ is $2$, $1$, or $0$, respectively.

\end{lem}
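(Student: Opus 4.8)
The plan is to reduce the estimate to a computation on individual generators by means of the subadditivity in Lemma \ref{l-2}, and then to feed in the bound on the number of occurrences of $a$ coming from Lemma \ref{l-0}. Fix a geodesic representation $g=s_1s_2\dots s_n$ with each $s_i\in A_\om$ and $n=|g|$. Iterating Lemma \ref{l-2} gives $L_q(g)\le\sum_{i=1}^n L_q(s_i)$, so everything comes down to understanding $L_q(s)$ for $s\in\{a,b_\om,c_\om,d_\om\}$, where the numbers $|g|_a$ and $|g|_{h_q}$ are read off from this particular geodesic and Lemma \ref{l-0} applies to it.

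First I would dispose of $a$. Since $\varphi_\om(a)=(01)(e,e)$, both first-level sections $a|_0$ and $a|_1$ are trivial, so every section of $a$ at level $q\ge1$ is trivial and $L_q(a)=0$.

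The heart of the matter is computing $L_q$ for $b_\om,c_\om,d_\om$; I treat $b_\om$, the other two being identical. From $\varphi_\om(b_\om)=(\beta(\om_1),b_{\tau(\om)})$ one has $b_\om|_0=\beta(\om_1)\in\{a,e\}$ and $b_\om|_1=b_{\tau(\om)}$. Tracing the section at level $q$ along a binary word $w$: if $w$ begins with $1^k0$ for some $k\le q-2$, then $b_\om|_w=\beta(\om_{k+1})|_{w'}$ with $|w'|\ge1$, which is trivial because $\beta(\om_{k+1})\in\{a,e\}$ and the positive-depth sections of $a$ vanish. Hence at level $q$ only two sections can be nontrivial: the section at $1^q$, equal to $b_{\tau^q\om}$ of length $1$, and the section at $1^{q-1}0$, equal to $\beta(\om_q)$ of length $|\beta(\om_q)|\le1$. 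Therefore
$$L_q(b_\om)=1+|\beta(\om_q)|,$$
which equals $1$ exactly when $\om_q=2$ (so that $\beta(\om_q)=e$) and equals $2$ otherwise. The same computation with $\zeta$ and $\delta$ shows $L_q(c_\om)=1+|\zeta(\om_q)|$ and $L_q(d_\om)=1+|\delta(\om_q)|$, so for each $q$ exactly one of $b_\om,c_\om,d_\om$ — namely the generator $h_q$ prescribed by $\om_q$ — has $L_q=1$, while the other two have $L_q=2$.

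Finally I would assemble these estimates. Since $L_q(a)=0$, summing over the geodesic and separating out the $|g|_{h_q}$ occurrences of $h_q$ from the remaining occurrences of $b_\om,c_\om,d_\om$ gives
$$L_q(g)\le\sum_{i=1}^n L_q(s_i)=2\bigl(|g|-|g|_a-|g|_{h_q}\bigr)+|g|_{h_q}=2|g|-2|g|_a-|g|_{h_q}.$$
By Lemma \ref{l-0}, $|g|_a\ge(|g|-1)/2$, hence $2|g|_a\ge|g|-1$, and substituting yields $L_q(g)\le|g|+1-|g|_{h_q}$, as claimed. The only genuine obstacle is the section bookkeeping of the third paragraph: one must verify that, apart from the single deep section $b_{\tau^q\om}$ and the single shallow section $\beta(\om_q)$, every section of a generator at level $q$ collapses to the identity, which is precisely where the vanishing of the positive-depth sections of $a$ is used.
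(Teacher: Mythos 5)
Your proof is correct and follows essentially the same route as the paper's: reduce to single generators via the subadditivity of $L_q$ from Lemma \ref{l-2}, compute $L_q(a)=0$, $L_q(h_q)=1$, $L_q(s)=2$ for the other two generators by locating the only possibly nontrivial sections at $1^q$ and $1^{q-1}0$, and then combine with the bound $|g|_a\ge(|g|-1)/2$ from Lemma \ref{l-0}. The only difference is that you spell out the recursion behind the section bookkeeping in more detail than the paper does; the argument is the same.
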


\begin{proof}

Let $n=|g|$.  Consider an arbitrary geodesic representation $g=s_1s_2\dots s_n$, where each $s_i\in A_\om$.  It
follows by induction from Lemma \ref{l-2} that $L_q(g)\le L_q(s_1)+L_q(s_2)
+\dots+L_q(s_n)$.  Fix an arbitrary word $w\in\{0,1\}^*$ of length
$q$.  Clearly, $a|_w=1$.  Further, $h_q|_w=1$ unless $w=1\ldots 1$ (in which case
$h_q|_w\in A_{\tau^q\om}\setminus\{a\}$).  If $s$ is any of the other two generators in
$A_\om$, then $s|_w=1$ unless $w=1\ldots 1$ (in which case $s|_w\in
A_{\tau^q\om}\setminus \{a\}$) or $w=1\ldots 10$ (in which case $s|_w=a$).  Therefore
$L_q(a)=0$, $L_q(h_q)=1$, and $L_q(s)=2$ if $s\in A_\om$ is neither $a$ nor
$h_q$.  It follows that $L_q(g)\le 2(|g|-|g|_a)-|g|_{h_q}$.  By Lemma
\ref{l-0}, $|g|_a\ge(|g|-1)/2$.  Hence $2(|g|-|g|_a)\le|g|+1$.

\end{proof}

\begin{lem}\label{l-4}

Suppose that the beginning of length $q$ of the sequence $\om$ contains 
each of the letters $0$, $1$, and $2$.  Then

$$L_q(g)\le \frac56|g|+\frac76+2^{q-1}$$
for all $g\in G_\om$.

\end{lem}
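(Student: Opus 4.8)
The plan is to combine two estimates: a sharpened form of Lemma \ref{l-3} describing the contraction coming from a single favorable level, and a \emph{depth-reduction} estimate that transports a contraction obtained at a shallow level down to level $q$ at a controlled cost. Throughout I fix one geodesic representation of $g$ and write $N=|g|-|g|_a=|g|_{b_\om}+|g|_{c_\om}+|g|_{d_\om}$ for the number of non-$a$ letters in it; by Lemma \ref{l-0} this satisfies $N\le(|g|+1)/2$. For a level $i\ge1$ let $X_i\in\{b_\om,c_\om,d_\om\}$ denote the generator made cheap by the $i$-th letter of $\om$ (so $X_i=h_i$ in the notation of Lemma \ref{l-3}). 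Re-examining the proof of Lemma \ref{l-3}, what is actually established there, before invoking Lemma \ref{l-0}, is the sharper inequality $L_i(g)\le 2(|g|-|g|_a)-|g|_{X_i}=2N-|g|_{X_i}$; this is my first ingredient.

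The second ingredient is the estimate $L_q(g)\le L_p(g)+2^p$ for every $1\le p<q$. To prove it I would split each length-$q$ word as $w=vu$ with $|v|=p$ and use $g_w=(g_v)_u$ to write $L_q(g)=\sum_{|v|=p}L^{(\tau^p\om)}_{q-p}(g_v)$, where the inner sum is taken in $G_{\tau^p\om}$. For each $v$, subadditivity (Lemma \ref{l-2}) applied to a geodesic of $g_v$, together with the fact (from the proof of Lemma \ref{l-3}) that at any level $\ge1$ the letter $a$ contributes $0$ and each other generator contributes at most $2$, gives $L^{(\tau^p\om)}_{q-p}(g_v)\le 2N(g_v)\le|g_v|+1$, the last step again by Lemma \ref{l-0}. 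Summing over the $2^p$ words $v$ yields the claim. It is essential here to use subadditivity rather than Lemma \ref{l-1}: the latter would produce the useless error $2^q$ in place of $2^p$. Combining the two ingredients, for every position $p\le q$ I obtain $L_q(g)\le 2N-|g|_{X_p}+2^{q-1}$, the error being absent when $p=q$.

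Now I would use the hypothesis. Let $T^\ast$ be the most frequent of the three types $b_\om,c_\om,d_\om$ in the fixed geodesic, so that $|g|_{T^\ast}\ge N/3$. Since the first $q$ letters of $\om$ contain all of $0,1,2$, the type $T^\ast$ is cheap at some position $p^\ast\le q$, i.e.\ $X_{p^\ast}=T^\ast$. Applying the combined estimate at $p=p^\ast$ gives $L_q(g)\le 2N-|g|_{T^\ast}+2^{q-1}\le 2N-\frac13N+2^{q-1}=\frac53N+2^{q-1}$, and substituting $N\le(|g|+1)/2$ yields $L_q(g)\le\frac56|g|+\frac56+2^{q-1}$, which is in fact slightly stronger than the asserted bound.

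The conceptual core, and the step I expect to require the most care, is the depth-reduction estimate together with the realization that the contraction should be spent on the most frequent generator type. A single cheap level saves only $|g|_{X_q}$, which may be tiny; but since the three type-counts sum to $N$ the largest is at least $N/3$, and the hypothesis guarantees that this abundant type is cheap somewhere among the first $q$ letters, from where the saving can be carried down to level $q$ at cost at most $2^{q-1}$. Producing the error term in the clean form $2^{q-1}$, rather than a larger multiple of it, is precisely where the subadditive descent (as opposed to one based on Lemma \ref{l-1}) is indispensable.
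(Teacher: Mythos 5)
Your proof is correct and follows essentially the same route as the paper's: apply Lemma \ref{l-3} at a level $q_0\le q$ where the most frequent of $b_\om,c_\om,d_\om$ in a fixed geodesic is the cheap generator (such a level exists by the hypothesis), then descend to level $q$ via $L_q(g)=\sum_{|v|=q_0}L_{q-q_0}(g_v)\le L_{q_0}(g)+2^{q_0}\le L_{q_0}(g)+2^{q-1}$. Your bookkeeping through $N=|g|-|g|_a$ merely sharpens the additive constant from $\frac76$ to $\frac56$; the decomposition and the key lemmas used are identical.
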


\begin{proof}
We have $|g|=|g|_a+|g|_{b_\om}+|g|_{c_\om}+|g|_{d_\om}$ whenever all numbers in the right-hand side
are computed for the same geodesic representation of $g$.
By Lemma \ref{l-0}, $|g|_a\le(|g|+1)/2$.  It follows that $|g|_s\ge (|g|-1)/6$ for some
$s\in\{b_\om,c_\om,d_\om\}$.  Lemma \ref{l-3} implies that $L_{q_0}(g)\le
|g|+1-|g|_s\le \frac56|g|+\frac76$ for some $1\le q_0\le q$.  In the case
$q_0=q$, we are done.  Otherwise we notice that $L_q(g)=\sum_{|w|=q_0}
L_{q-q_0}(g_w)$.  By Lemma \ref{l-3}, $L_{q-q_0}(g_w)\le |g_w|+1$ for any
word $w$.  Therefore $L_q(g)\le L_{q_0}(g)+2^{q_0}\le \frac56|g|+\frac76
+2^{q-1}$.

\end{proof}

%
%

Note that the growth function of a group is \textit{sub-multiplicative}, that is,
$\gamma(n+m)\leq\gamma(n)\gamma(m)$ for every $n,m \in \bN$.
It is convenient to extend the argument of a growth function to non-integer values:
Given increasing $f  :\bN \rightarrow \bN$ , define $\tilde f :\mathbb{R}^+ \rightarrow \bN$
 by $\tilde f(x)=f
 (\lceil x\rceil)$ for
all  $x$ where $\lceil x\rceil$ is the least natural number bigger than or equal to $x$.
Observe that $f(x+\kappa)\le\tilde f(x)$ whenever
$\kappa<1$. If $f:\bN \rightarrow \bN$ is sub-multiplicative then it is easy to see that
$\tilde f(x+y)\le\tilde f(x)\tilde f(y)$ for any $x,y>0$.

For the remainder of this section let $\rho=\frac{131}{132}$.

 \begin{lem}\label{l-6}

Suppose that the beginning of length $q$ of the sequence $\om$ features
each of the letters $0$, $1$, and $2$.  Then
$$\tilde\ga_\om(x)\le 2^{2^{q+1}}\Biggl(\tilde\ga_{\tau^q\om}
\left(\frac{x}{11\cdot2^q}\right)\Biggr)^{\rho(11\cdot2^q)}$$
for any $x>0$.

\end{lem}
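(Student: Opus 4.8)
\textit{Setup and reduction to counting.} The plan is to bound $\gamma_\om(\lceil x\rceil)$ by controlling elements of $G_\om$ through their behaviour at the $q$-th level of the tree. Write $n=\lceil x\rceil$ and $K=2^q$. Every $g\in G_\om$ is determined by the pair consisting of (i) the automorphism it induces on the finite binary tree of depth $q$ (its portrait up to level $q$) and (ii) the family of sections $(g_w)_{|w|=q}$, each of which lies in $G_{\tau^q\om}$. The portrait ranges over the automorphism group of the depth-$q$ tree, of order $2^{K-1}$, so $g\mapsto(\text{portrait},(g_w)_w)$ is injective and
$$\tilde\gamma_\om(x)=\gamma_\om(n)\le 2^{K-1}\cdot N,$$
where $N$ is the number of admissible families $(h_w)_{|w|=q}$ in $G_{\tau^q\om}$. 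The two constraints I would use come from the preceding lemmas: by Lemma \ref{l-1} each section satisfies $|h_w|\le 2^{-q}n+1$, and, since the first $q$ letters of $\om$ contain $0,1,2$, Lemma \ref{l-4} gives the total bound $\sum_{|w|=q}|h_w|=L_q(g)\le \tfrac56 n+\tfrac76+2^{q-1}=:M$.

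\textit{Bounding $N$ by submultiplicative chunking.} First I would count, for a fixed length vector $(m_w)$ with $\sum_w m_w\le M$, the families realising it: their number is at most $\prod_w\tilde\gamma_{\tau^q\om}(m_w)$. The key device is to chunk each coordinate into blocks of the target size $u=\frac{x}{11K}$. The submultiplicativity of $\tilde\gamma_{\tau^q\om}$ recorded just above the statement gives $\tilde\gamma_{\tau^q\om}(m_w)\le\tilde\gamma_{\tau^q\om}(u)^{\lceil m_w/u\rceil}$, hence $\prod_w\tilde\gamma_{\tau^q\om}(m_w)\le\tilde\gamma_{\tau^q\om}(u)^{\sum_w\lceil m_w/u\rceil}$; using $\lceil t\rceil\le t+1$ and the total bound, the exponent is at most $M/u+K$. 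With $n\le x+1$ this becomes $M/u+K\le\frac{61}{6}K+\frac{22K}{x}+\frac{11K^2}{2x}$, which for large $x$ lies below the target exponent $\rho\cdot 11K=\frac{131}{12}K$ (note $\frac{61}{6}=\frac{122}{12}<\frac{131}{12}$). Bounding the number of length vectors by the per-section cap of Lemma \ref{l-1}, namely $(2^{-q}n+2)^{K}$, I obtain
$$\tilde\gamma_\om(x)\le 2^{K-1}\,(2^{-q}n+2)^{K}\,\tilde\gamma_{\tau^q\om}(u)^{\,M/u+K}.$$

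\textit{Folding in the lower-order factors.} It remains to absorb the three subsidiary factors into the claimed shape. The portrait count $2^{K-1}$ is swallowed by the front factor $2^{2^{q+1}}=2^{2K}$, with room $2^{K+1}$ to spare. The combinatorial factor $(2^{-q}n+2)^{K}$ is of subexponential order relative to $\tilde\gamma_{\tau^q\om}(u)$: invoking the universal lower bound $e^{\sqrt n}\preceq\gamma_{\tau^q\om}$, valid for \emph{every} $\om\in\Om$ by Theorem \ref{general}(3), one has $\tilde\gamma_{\tau^q\om}(u)\ge e^{c\sqrt u}$, so a few extra powers of $\tilde\gamma_{\tau^q\om}(u)$ dominate $(2^{-q}n+2)^{K}$. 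These extra powers are exactly the gap between $\frac{61}{6}K$ and $\frac{131}{12}K$, i.e.\ the slack $\frac34K$. Raising the exponent from $M/u+K$ up to $\rho\cdot 11K$ and collecting terms yields the stated inequality $\tilde\gamma_\om(x)\le 2^{2^{q+1}}\bigl(\tilde\gamma_{\tau^q\om}(\tfrac{x}{11\cdot 2^q})\bigr)^{\rho(11\cdot 2^q)}$.

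\textit{Where the difficulty lies.} The conceptual point, and the reason $\rho<1$, is that Lemma \ref{l-4} bounds the total section length by $\frac56 n$ rather than $n$; this is what pushes the exponent strictly below the trivial value $11K$ and is the source of subexponential growth upon iteration. The hard part is purely the uniform bookkeeping: I must verify, for all $x>0$ and all admissible $q$ at once, that the portrait count, the number of section-length vectors, and the ceiling losses in the chunking all fit under the front factor $2^{2^{q+1}}$ and the exponent $\rho\cdot 11K$. This is where the constants $11$ and $\rho=\frac{131}{132}$ are calibrated, and it forces one to separate the regime of large $x$ (where the $\frac{22K}{x}$ and $\frac{11K^2}{2x}$ corrections are negligible and the $e^{\sqrt{\cdot}}$ bound absorbs the combinatorial factor) from that of small $x$ (where the right-hand side is already enormous and the crude estimate $\gamma_\om(n)\le C^{n}$ for a universal $C$ settles the inequality directly).
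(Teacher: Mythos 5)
Your overall strategy is the paper's: determine $g$ by its portrait up to level $q$ plus its level-$q$ sections, use Lemma \ref{l-1} for the individual caps and Lemma \ref{l-4} for the total $L_q(g)\le\frac56n+\frac76+2^{q-1}$, and convert everything into powers of $\tilde\gamma_{\tau^q\om}\bigl(\frac{x}{11\cdot2^q}\bigr)$ by submultiplicativity. But the combinatorial step where you exploit the total-length constraint is different from the paper's, and as written it has a genuine gap: the bound you arrive at is
$2^{K-1}(2^{-q}n+2)^K\,\tilde\gamma(u)^{\frac{61}{6}K+\frac{22K}{x}+\frac{11K^2}{2x}}$ with $u=\frac{x}{11K}$, and you must fit both the correction terms and the factor $(2^{-q}n+2)^K\approx(11u)^K$ inside the total slack of $\frac{131}{12}K-\frac{122}{12}K=\frac34K$ powers of $\tilde\gamma(u)$. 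Already the correction terms alone consume all of that slack unless $x\gtrsim 7.3K$, and absorbing $(11u+3)^K$ requires $(11u+3)^{c}\le\tilde\gamma_{\tau^q\om}(u)$ pointwise with $c\ge 4/3$ (in fact more, since you cannot spend the same slack twice). This fails badly for moderate $u$: e.g.\ for $u\approx 2$ one has $\tilde\gamma(u)=\gamma(2)\le17$ while $(11u+3)^{4/3}>70$. The bound $e^{\sqrt n}\preceq\gamma_{\tau^q\om}$ does not rescue this, both because $\preceq$ carries an unspecified multiplicative constant (so it gives nothing uniform for small and moderate $u$) and because it only dominates $(11u)^{8/3}$ for quite large $u$. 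Meanwhile your fallback $\gamma_\om(n)\le C^n$ versus the right-hand side's guaranteed size $2^{2K}5^{\rho\cdot11K}$ only covers $n\lesssim 14K$, i.e.\ $u\lesssim 1.3$. So there is an uncovered intermediate range of $x$, and since the lemma must hold for \emph{all} $x>0$ with the $x$-independent prefactor $2^{2^{q+1}}$, the argument does not close.

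The structural source of the problem is the detour through length vectors: enumerating $(m_w)_{|w|=q}$ costs a factor exponential in $2^q$ whose base grows with $x$, which nothing on the right-hand side can pay for. The paper's proof avoids this entirely. Instead of recording the exact lengths, it records only the set $W$ of words $w$ with $|g_w|>\frac{12}{11}\cdot2^{-q}\bigl(\frac56n+\frac76+2^{q-1}\bigr)$ --- a subset of $\{0,1\}^q$, hence only $2^{2^q}$ choices, independent of $x$ --- and Markov's inequality applied to Lemma \ref{l-4} gives $|W|<\frac{11}{12}\cdot2^q$. Sections outside $W$ are then counted by $\tilde\gamma_{\tau^q\om}\bigl(\frac{10}{11}x/2^q\bigr)\le\tilde\gamma_{\tau^q\om}(u)^{10}$ and sections in $W$ by $\tilde\gamma_{\tau^q\om}(x/2^q)\le\tilde\gamma_{\tau^q\om}(u)^{11}$ (here the additive $+1$'s are absorbed into the ceiling in $\tilde\gamma$, not into the exponent), so the exponent is exactly $10\cdot2^q+|W|<\frac{131}{12}\cdot2^q=\rho(11\cdot2^q)$ with no $x$-dependent error terms and no extraneous combinatorial factor beyond $2^{2^q}\cdot2^{2^q}=2^{2^{q+1}}$. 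If you replace your length-vector enumeration and chunking by this ``mark the long sections'' device, your write-up becomes the paper's proof.
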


\begin{proof}

Let $n=\lceil x\rceil$ and consider an arbitrary element $g\in G_\om$ of
length at most $n$.  By Lemma \ref{l-1}, we have $|g_w|\le 2^{-q}n+1-2^{-q}$
for any word $w\in\{0,1\}^*$ of length $q$.  We denote by $W$ the
set of all words $w$ of length $q$ such that $|g_w|>\frac{12}{11}
\cdot2^{-q}(\frac56n+\frac76+2^{q-1})$.  In view of Lemma \ref{l-4}, the
cardinality of $W$ satisfies $|W|<\frac{11}{12}\cdot2^q$.

The element $g$ is uniquely determined by its sections on words of length
$q$ and its restriction to the $q$th level of the binary rooted tree.  The
number of possible choices for the restriction is at most $2^{2^q}$.  The
number of possible choices for the set $W$ is also at most $2^{2^q}$.  Once
the set $W$ is specified, the number of possible choices for a particular
section $g_w$ is at most $\ga_{\tau^q\om}(\frac{12}{11}
\cdot2^{-q}(\frac56n+\frac76+2^{q-1}))$ if $w\notin W$ and at most
$\ga_{\tau^q\om}(2^{-q}n+1-2^{-q})$ otherwise.  Since $n<x+1$, we have
$2^{-q}n+1-2^{-q}<2^{-q}x+1$ so that $\ga_{\tau^q\om}(2^{-q}n+1-2^{-q})\le
\tilde\ga_{\tau^q\om}(x/2^q)$.  Besides, $\frac{12}{11}
\cdot2^{-q}(\frac56n+\frac76+2^{q-1})<\frac{10}{11}2^{-q}x+1$ so that
$\ga_{\tau^q\om}(\frac{12}{11}\cdot2^{-q}(\frac56n+\frac76+2^{q-1}))\le
\tilde\ga_{\tau^q\om}(\frac{10}{11}x/2^q)$.  Finally, for a fixed set $W$
the number of possible choices for all sections of $g$ is

$$\displaystyle
\begin{array}{cl}
\tilde\ga_{\tau^q\om}\left(\frac{10x}{11\cdot 2^q}\right)^{2^q-|W|}
\tilde\ga_{\tau^q\om}\left(\frac{x}{2^q}\right)^{|W|} & \le
\tilde\ga_{\tau^q\om}\left(\frac{x}{11\cdot 2^q}\right)^{10(2^q-|W|)+11|W|} 
\le\tilde\ga_{\tau^q\om}\left(\frac{x}{11\cdot2^q}\right)^{\frac{131}{132}\cdot11\cdot2^q}.
\end{array}
$$
Consequently,

\begin{equation}
\label{iterate}
 \tilde\ga_\om(x)\le 2^{2^{q+1}}\tilde\ga_{\tau^q(\om)}
\left(\frac{x}{11\cdot2^q}\right)^{\rho(11\cdot2^q)}.
\end{equation}

\end{proof}

As in Section \ref{sec2}, to every
infinite word $\om=l_1l_2\dots$ in $\Om_{\infty}$ we associate an increasing
sequence of integers $t_i=t_i(\om)$, $i=0,1,2,\dots$.  The sequence is
defined inductively.  First we let $t_0=0$.  Then, once some $t_i$ is
defined, we let $t_{i+1}$ to be the smallest integer such that the finite
word $l_{t_i+1}l_{t_i+2}\dots l_{t_{i+1}}$ features each of the letters
$0$, $1$, and $2$. Further, let $q_i=t_i-t_{i-1}$ for $i=1,2,\ldots$

\begin{lem}\label{dohuz}
Let $x_m=11^m\cdot2^{t_m}$ for any integer $m>0$.  Then $\ga_\om(x_m)\le
10^{\rho^mx_m}$.
\end{lem}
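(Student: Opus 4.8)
The key observation is that by the very definition of the sequence $t_i=t_i(\om)$, for each $i$ the beginning of length $q_i=t_i-t_{i-1}$ of the shifted word $\tau^{t_{i-1}}\om$ contains each of the letters $0$, $1$, and $2$. This is exactly the hypothesis needed to apply Lemma \ref{l-6} to the group $G_{\tau^{t_{i-1}}\om}$ with the value $q=q_i$. So the strategy is to apply \eqref{iterate} repeatedly, peeling off one block of the word $\om$ at a time, and track how the argument $x$ and the exponent evolve after $m$ steps.

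Concretely, I would proceed by induction on $m$. Applying Lemma \ref{l-6} to $G_\om$ with $q=q_1=t_1$ gives
$$\tilde\ga_\om(x)\le 2^{2^{q_1+1}}\Bigl(\tilde\ga_{\tau^{q_1}\om}\bigl(\tfrac{x}{11\cdot 2^{q_1}}\bigr)\Bigr)^{\rho(11\cdot 2^{q_1})}.$$
Now I apply Lemma \ref{l-6} again, this time to $G_{\tau^{t_1}\om}$ with $q=q_2$, substituting into the inner growth function. After $m$ iterations the argument will have been divided by $11^m\cdot 2^{t_m}$ (since $2^{q_1}\cdot 2^{q_2}\cdots 2^{q_m}=2^{t_m}$ and each step contributes a factor $11$), which is precisely the definition of $x_m$. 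Evaluating the chain at $x=x_m$ makes the innermost argument equal to $1$, so the innermost factor is $\tilde\ga_{\tau^{t_m}\om}(1)$, which is at most the number of elements of length $\le 1$, a small explicit constant (at most $5$ generators plus the identity, so $\le 5$ or a similar bound). The exponent on this innermost term, after multiplying the exponents $\rho(11\cdot 2^{q_i})$ across all $m$ steps, becomes $\rho^m\cdot 11^m\cdot 2^{t_m}=\rho^m x_m$. This gives the claimed bound $\ga_\om(x_m)\le 10^{\rho^m x_m}$, the base $10$ absorbing both the small constant $\tilde\ga_{\tau^{t_m}\om}(1)$ and, crucially, the accumulated prefactors $2^{2^{q_i+1}}$.

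\textbf{The main obstacle will be controlling the accumulated prefactors.} Unlike the exponent, which contracts geometrically because of the factor $\rho<1$ at each stage, the prefactors $2^{2^{q_i+1}}$ are \emph{not} small — they can be enormous when some block length $q_i$ is large. I will need to show that when these prefactors are raised to the product of the exponents coming from the \emph{remaining} steps and then multiplied together, the total is still dominated by $10^{\rho^m x_m}$. The point is that the prefactor $2^{2^{q_i+1}}$ introduced at step $i$ is carried through steps $i+1,\dots,m$, so it gets raised to a power of the form $\rho^{m-i}\,11^{m-i}\,2^{t_m-t_i}$; one has to check that $2^{2^{q_i+1}}$ raised to this power, summed over $i$, stays below the target. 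Since $2^{q_i}\le 2^{t_i}$ and the tail product $2^{t_m-t_i}$ compensates, the bound $2^{2^{q_i+1}}=2^{2\cdot 2^{q_i}}$ is comfortably beaten by the exponential gain, and the slack between base $2$ (inside) and base $10$ (target) is what makes the constants close. I would verify this bookkeeping carefully, as it is the only genuinely delicate part; everything else is a mechanical iteration of \eqref{iterate} with the substitution $x=x_m$.
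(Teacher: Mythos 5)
Your plan is exactly the paper's proof: iterate the estimate \eqref{iterate} along the blocks $q_1,\dots,q_m$ (which is legitimate because, by the definition of the $t_i$, each block of $\tau^{t_{i-1}}\om$ of length $q_i$ contains all three letters), evaluate at $x=x_m$ so the innermost argument is $1$ and the innermost factor is $\ga_{\tau^{t_m}\om}(1)\le 5$ raised to $\al_1\cdots\al_m=\rho^m x_m$, and let the base $10$ absorb both the $5$ and the accumulated prefactors. You also correctly identify the only delicate point, namely controlling those prefactors.

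One detail in your bookkeeping is backwards, though. When you substitute the level-$(i)$ bound into the expression already obtained from levels $1,\dots,i-1$, the new prefactor $2^{\be_i}$ (with $\be_i=2^{q_i+1}$) lands inside the factor that is raised to $\al_1\cdots\al_{i-1}$, and it is never touched by the \emph{later} substitutions, which only act on the innermost growth function. So the prefactor from step $i$ is raised to $\al_1\cdots\al_{i-1}=\rho^{i-1}11^{i-1}2^{t_{i-1}}$, not to $\rho^{m-i}11^{m-i}2^{t_m-t_i}$ as you wrote. The quantity to control is therefore $S_m=\be_1+\al_1\be_2+\dots+\al_1\cdots\al_{m-1}\be_m$, and the verification is easier than you anticipate: since $q_i\ge 3$ one has $\al_i=\tfrac{11}{2}\rho\,\be_i>5\be_i>1+\be_i$, and a one-line induction gives $S_{m+1}\le\al_1\cdots\al_m(1+\be_{m+1})<R_{m+1}$, i.e., $S_m\le R_m$, whence $\ga_\om(x_m)\le 2^{S_m}5^{R_m}\le 10^{R_m}=10^{\rho^m x_m}$.
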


\begin{proof}
For any integer $m>0$ let $\al_m=\rho(11\cdot 2^{q_m})$ and
$\be_m=2^{q_m+1}$.  Lemma \ref{l-6} implies that
$$
\tilde\ga_{\tau^{t_{m-1}}(\om)}(x)\le 2^{\be_m}\,
\tilde\ga_{\tau^{t_m}(\om)}\left(\frac{x}{11\cdot2^{q_m}}\right)^{\al_m}
$$
for any $x>0$.  Since $q_1+q_2+\dots+q_m=t_m$, it follows that for any
integer $m>0$ and real $x>0$,
$$
\tilde\ga_\om(x)\le 2^{S_m}\,\tilde\ga_{\tau^{t_m}(\om)}
\left(\frac{x}{11^m\cdot2^{t_m}}\right)^{R_m},
$$
where $R_m=\al_1\dots\al_m$ and $S_m=\be_1+\al_1\be_2+\dots
+\al_1\dots\al_{m-1}\be_m$.  In particular, $\ga_\om(x_m)\le 2^{S_m}
\ga_{\tau^{t_m}(\om)}(1)^{R_m}=2^{S_m}5^{R_m}$.  Since $R_m=\rho^m x_m$, it
remains to show that $S_m\le R_m$.

We have $\al_m=\frac{11}{2}\rho\be_m=\frac{131}{24}\be_m>5\be_m$.  Notice
that $q_m\ge3$ so that $\be_m\ge16$.  Hence $\al_m-\be_m>64$.  Now the
inequality $S_m\le R_m$ is proved by induction on $m$.  First of all,
$S_1=\be_1<\al_1=R_1$.  Then, assuming $S_m\le R_m$ for some $m>0$, we get
$S_{m+1}=S_m+\al_1\dots\al_m\be_{m+1}\le R_m+\al_1\dots\al_m\be_{m+1}
=\al_1\dots\al_m(1+\be_{m+1})<R_{m+1}$.
\end{proof}


Recall some notation from Section \ref{sec2} (for brevity, we drop index $p$).
For any $C\geq0$ let $\Om_{C}$ denote the set of all infinite words
$\om\in\Om_\infty$ such that $t_n(\om)\le Cn$ for sufficiently large $n$.
Given $\eps>0$, let $\Om_{C,\eps}$ denote the set of all $\om\in\Om_{C}$ such
that $q_{n+1}=t_{n+1}(\om)-t_n(\om)\le\eps t_n(\om)$ for sufficiently large $n$.


Now we can prove the next theorem, which is  a more detailed version of Theorem \ref{important4} (in the case $p=2$).

\begin{thm}
Let $C>0$ and $\alpha > 1-\dfrac{\log(\rho^{-1})}{\log(11\cdot2^{C})}$.
Then there exists $\eps>0$ such that $\ga_\om(n)\preceq
e^{n^{\alpha}}$ for any $\om\in\Om_{C,\eps}$.
\label{tek}
\end{thm}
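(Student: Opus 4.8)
The plan is to combine Lemma \ref{dohuz}, which bounds $\gamma_\om$ only along the sparse sequence of sample points $x_m=11^m2^{t_m}$, with submultiplicativity of the growth function to fill in the intervals between consecutive $x_m$; the two defining conditions of $\Om_{C,\eps}$ play complementary roles here. First I would exploit $t_m\le Cm$ (valid for large $m$ since $\om\in\Om_C$): it gives $x_m\le(11\cdot2^C)^m$, hence $m\ge\log x_m/\log(11\cdot2^C)$, and since $\rho<1$ this yields $\rho^m x_m\le x_m^{\,1-\log(\rho^{-1})/\log(11\cdot2^C)}$. The hypothesis $\alpha>1-\log(\rho^{-1})/\log(11\cdot2^C)$ makes the right-hand side at most $x_m^\alpha$ for large $m$, so Lemma \ref{dohuz} already gives $\gamma_\om(x_m)\le10^{x_m^\alpha}$ along the subsequence $(x_m)$.

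Next I would interpolate. For $x_m\le n<x_{m+1}$, submultiplicativity gives $\gamma_\om(n)\le\gamma_\om(x_m)^{\lceil n/x_m\rceil}$. Because $x_{m+1}/x_m=11\cdot2^{q_{m+1}}$ is an integer exceeding $n/x_m$, we have $\lceil n/x_m\rceil\le x_{m+1}/x_m$, and Lemma \ref{dohuz} then yields the clean estimate $\gamma_\om(n)\le 10^{\rho^m x_m\cdot(x_{m+1}/x_m)}=10^{\rho^m x_{m+1}}$. Since $n\ge x_m$, it now suffices to prove that $\rho^m x_{m+1}\le K\,x_m^\alpha$ for some constant $K$ and all large $m$.

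This final estimate is the crux and the place where $\eps$ must be fixed. A direct computation gives
\[
\frac{\rho^m x_{m+1}}{x_m^\alpha}=11\,\rho^m\,11^{m(1-\alpha)}\,2^{(1-\alpha)t_m+q_{m+1}}.
\]
Using $t_m\le Cm$ together with $q_{m+1}\le\eps t_m\le\eps Cm$ (the second defining condition of $\Om_{C,\eps}$), the $m$-dependent factor is bounded by $\lambda^m$, where $\lambda=\rho\,(11\cdot2^C)^{1-\alpha}\,2^{\eps C}$. The hypothesis on $\alpha$ is precisely the statement that $\rho\,(11\cdot2^C)^{1-\alpha}<1$, i.e.\ that $\lambda<1$ when $\eps=0$; hence any choice $\eps<[\log(\rho^{-1})-(1-\alpha)\log(11\cdot2^C)]/(C\log2)$ keeps $\lambda<1$. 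Then $\lambda^m\to0$, so $\rho^m x_{m+1}\le 11\,x_m^\alpha$ for large $m$, which is the desired bound with $K=11$.

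Putting the pieces together gives $\gamma_\om(n)\le10^{\,11\,n^\alpha}$ for all large $n$; absorbing this constant and the finitely many small values of $n$ into the definition of $\preceq$ (recall $f\preceq g$ means $f(n)\le g(Cn)$) yields $\gamma_\om(n)\preceq e^{n^\alpha}$ for every $\om\in\Om_{C,\eps}$. I expect the main obstacle to be the bookkeeping in the third paragraph: making the geometric factor $\lambda^m$ subunital requires choosing $\eps$ small relative to the slack $\delta=\log(\rho^{-1})-(1-\alpha)\log(11\cdot2^C)>0$ furnished by the hypothesis on $\alpha$, and one must track that the two uses of the $\Om_{C,\eps}$ conditions (on $t_m$ and on $q_{m+1}$) combine with the correct signs so that the $\eps=0$ value of $\lambda$ is already below $1$.
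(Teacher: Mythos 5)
Your argument is correct and follows essentially the same route as the paper: Lemma \ref{dohuz} at the sample points $x_m$, the condition $t_m\le Cm$ to turn $\rho^m$ into a negative power of $x_m$, the condition $q_{m+1}\le\eps t_m$ to control the jump $x_{m+1}/x_m$, and a choice of $\eps$ small relative to the slack in $\alpha>1-\kappa$. The only (cosmetic) difference is that you interpolate between $x_m$ and $x_{m+1}$ via submultiplicativity, whereas the paper simply uses monotonicity, $\gamma_\om(n)\le\gamma_\om(x_{m+1})\le 10^{\rho^{m+1}x_{m+1}}$, which gives an essentially identical bound with less work.
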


\begin{proof} 
Let $\kappa=\dfrac{\log(\rho^{-1})}{\log(11\cdot2^{C})}$.
Note that $0<\kappa<1$.
Choose $\epsilon>0$  small enough so that $(\eps+1)(1-\kappa)<\alpha$.
Let $\om \in \Om_{C,\eps}$. Then there exists an integer $N>0$ such that  
$ t_m\leq Cm$ and  $q_{m+1}=t_{m+1}-t_m\leq \epsilon t_m $
 for $m\ge N$.
By the choice of $\kappa$ we have

\begin{equation*}
 \rho^m=\left(\frac{1}{(11\cdot 2^C)^{\kappa}}\right)^m= \frac{1}{(11^m\cdot 2^{Cm})^{\kappa}}
\leq\frac{1}{(11^m\cdot 2^{t_m})^{\kappa}}=x_m^{-{\kappa}}
\end{equation*}
for any $m\ge N$.
Since $$x_{m+1}=11^{m+1}\cdot 2^{t_{m+1}}=11\cdot2^{q_{m+1}}\cdot11^m\cdot2^{t_m}=11\cdot
2^{q_{m+1}}\cdot x_m,$$
we obtain
$$
\begin{array}{cl}
 x_{m+1}^{1-\kappa}=(11\cdot 2^{q_{m+1}}\cdot x_m)^{1-\kappa}
& \leq 11^{1-\kappa}  (11^{\epsilon m}\cdot2^{\epsilon t_{m}}\cdot x_m)^{1-\kappa} \\
 &  =  11^{1-\kappa}x_m^{(\epsilon+1)(1-\kappa)}  \le11^{1-\kappa}x_m^{\alpha}.
\end{array}
$$

Consider an arbitrary integer $n\ge x_N$. We have  $x_m\leq n \leq x_{m+1}$
for some $m\ge N$. By Lemma \ref{dohuz},
$$\gamma_{\om}(n)\leq \gamma_\om(x_{m+1})\leq 10^{\rho^{m+1}x_{m+1}}.$$
By the above,
$$\rho^{m+1}x_{m+1}\leq x_{m+1}^{1-\kappa}\leq 11^{1-\kappa}x_m^{\alpha}\leq
11^{1-\kappa}n^{\alpha},$$
hence
$$\gamma_\om(n)\leq 10^{11^{1-\kappa}n^{\alpha}}=D^{n^{\alpha}}, $$
where $D=10^{11^{1-\kappa}}$. Thus $\gamma_\om(n)
\preceq D^{n^\alpha} \sim e^{n^\alpha}$.

\end{proof}


Suppose $\mu$ is a Borel probability measure on $\Om$ that is invariant and
ergodic relative to the shift transformation $\tau:\Om\to\Om$.  Since
$\Om_{\infty}$ is a Borel, shift invariant set, the measure $\mu$ is either
supported on $\Om_{\infty}$ or else $\mu(\Om_{\infty})=0$.
Theorem \ref{ana1} will be derived from Theorem \ref{tek} using the following lemma.

\begin{lem}\label{l-th}

If the measure $\mu$ is supported on $\Om_{\infty}$, then there exists
$C_0>0$ such that $t_n(\om)/n\to C_0$ as $n\to\infty$ for $\mu$-almost all
$\om\in\Om$.  Consequently, $\mu(\Om_{C,\eps})=1$ for any $C>C_0$ and
$\eps>0$. In the case $\mu$ is the uniform Bernoulli measure on $\Om$, we can take $C_0<7.3$.

\end{lem}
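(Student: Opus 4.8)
The plan is to reduce the statement to Kingman's subadditive ergodic theorem applied to a suitable \emph{block-counting} functional, and then to invert. The natural identity $t_{n+m}(\om)=t_n(\om)+t_m(\tau^{t_n(\om)}\om)$ is an exact cocycle, but the time-shift $\tau^{t_n(\om)}$ is \emph{random}, so it is not of Kingman type. To bypass this I would pass to the inverse quantity
\[
v_n(\om)=\max\{\,k\ge 0 : l_1 l_2\dots l_n \text{ splits into } k \text{ consecutive subwords, each containing } 0,1,2\,\},
\]
with $v_n(\om)=0$ when $n<t_1(\om)$. This functional is \emph{superadditive with respect to the fixed shift}: concatenating an optimal partition of $l_1\dots l_n$ with an optimal partition of $l_{n+1}\dots l_{n+m}$, and absorbing the unused tail of the first word into the first block of the second, yields $v_{n+m}(\om)\ge v_n(\om)+v_m(\tau^n\om)$. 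A standard greedy exchange argument shows the greedy left-to-right parse is optimal, whence $v_{t_k(\om)}(\om)=k$ by minimality of $t_k$; this is the bridge back to the $t_n$.

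First I would record that $0\le v_n\le n/3$ (each block has length at least $3$), so all integrability hypotheses of Kingman's theorem hold trivially and, by invariance of $\mu$, the sequence $a_n=\int v_n\,d\mu$ is superadditive. Kingman's theorem (superadditive form) then gives $v_n/n\to L$ $\mu$-a.e., with $L$ constant by ergodicity and $L=\sup_n a_n/n$. The crucial point is $L>0$: since $\mu$ is supported on $\Om_\infty$ we have $t_1<\infty$ $\mu$-a.e., hence $a_{n_0}\ge\mu(t_1\le n_0)>0$ for some $n_0$, and superadditivity gives $L\ge a_{n_0}/n_0>0$. Consequently $C_0:=1/L$ is finite (indeed $C_0\ge 3$). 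Evaluating the a.e. convergence $v_n/n\to L$ along the subsequence $n=t_k(\om)$ and using $v_{t_k}(\om)=k$ gives $k/t_k(\om)\to L$, i.e. $t_k(\om)/k\to C_0$, which is the first assertion.

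The two consequences are then routine. For $C>C_0$, the convergence $t_n/n\to C_0$ forces $t_n(\om)\le Cn$ for all large $n$, so $\mu(\Om_C)=1$. Moreover $t_n\sim C_0 n$ gives $t_{n+1}/t_n\to 1$, whence $q_{n+1}/t_n=t_{n+1}/t_n-1\to 0$; thus for every $\eps>0$ one has $q_{n+1}\le\eps\,t_n$ eventually, so $\mu(\Om_{C,\eps})=1$. Finally, for the uniform Bernoulli measure the successive block lengths $q_1,q_2,\dots$ are i.i.d.: each $t_k$ is a stopping time for the i.i.d. letter sequence, so after it the sequence restarts afresh and independently. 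Here $q_1=t_1$ is the classical coupon-collector time for three coupons, with $\mathbb{E}[q_1]=3\bigl(1+\tfrac12+\tfrac13\bigr)=\tfrac{11}{2}$; by the strong law $t_n/n\to\mathbb{E}[q_1]=5.5$, so one may take $C_0=5.5<7.3$.

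The step I expect to be the main obstacle is the passage from the natural random-shift cocycle for $t_n$ to a genuinely $\tau$-superadditive object: identifying $v_n$, proving the concatenation inequality in the correct direction, and---most delicately---verifying $L>0$, which is precisely what makes $C_0$ finite and hence the statement meaningful. Once superadditivity and positivity of $L$ are in hand, Kingman's theorem together with the elementary inversion complete the argument.
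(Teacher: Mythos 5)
Your proof is correct, but it follows a genuinely different route from the paper's in its two key steps. For the almost-everywhere convergence, you pass to the superadditive block-counting functional $v_n$ (the paper's $T_n$) and invoke Kingman's subadditive ergodic theorem, whereas the paper avoids Kingman entirely: it applies Birkhoff's theorem to each fixed bounded observable $T_m$ and then runs an explicit two-sided approximation argument, exploiting the near-additivity $T(w_1)+T(w_2)\le T(w_1w_2)\le T(w_1)+T(w_2)+1$ to squeeze $T_k(\om)/k$ between $I_m/m\pm 1/m$; your appeal to Kingman is cleaner but uses a heavier black box, while the paper's argument is self-contained modulo Birkhoff. The positivity of the limit $L$ (equivalently finiteness of $C_0$), the greedy-optimality identity $v_{t_k}=k$, and the inversion $t_k/k\to 1/L$ are handled essentially identically in both arguments, and your derivation of $\mu(\Om_{C,\eps})=1$ from $t_{n+1}/t_n\to 1$ is the same routine step the paper leaves implicit. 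For the Bernoulli case the divergence is sharper: the paper bounds $I$ from below by the single finite computation $I\ge I_7/7=710/(7\cdot 729)$, obtained by enumerating words of length $7$ according to the value of $T$, which yields only $C_0<7.3$; you instead observe that the increments $q_k$ are i.i.d.\ copies of the three-coupon collector time (via the strong Markov property at the stopping times $t_k$) and compute $C_0=\mathbb{E}[q_1]=3\bigl(1+\tfrac12+\tfrac13\bigr)=\tfrac{11}{2}$ exactly. This renewal argument is both simpler and strictly stronger than the paper's estimate, and since the lemma is only used through the inequality $C_0<7.3$, your constant $5.5$ would in fact slightly improve the exponent in Theorem 1(b).
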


\begin{proof}

For any finite word $w$ over the alphabet $\{0,1,2\}$ let $T(w)$ denote the
maximal number of non overlapping sub-words of $w$ each containing all the
letters.  Clearly, $T(w)\le |w|/3$.  It is easy to see that $T(w_1)+T(w_2)
\le T(w_1w_2)\le T(w_1)+T(w_2)+1$ for any words $w_1$ and $w_2$.  It
follows by induction that $T(w_0)+T(w_1)+\dots+T(w_k)\le T(w_0w_1\dots w_k)
\le T(w_0)+T(w_1)+\dots+T(w_k)+k$ for any words $w_0,w_1,\dots,w_k$.

For any $\om\in\Om$ and integer $m>0$ let $T_m(\om)=T(\om_m)$, where
$\om_m$ is the beginning of length $m$ of the sequence $\om$.  We are going
to show that for $\mu$-almost all $\om$ there is a limit of $T_m(\om)/m$ as
$m\to\infty$.  Note that $T_m$ is a bounded ($0\le T_m\le m/3$) Borel
function on $\Om$.  Let

$$I_m=\int_{\Om} T_m\,d\mu.$$
If $\om\in\Om_{\infty}$ then $T_m(\om)>0$ for $m$ large enough.  Since the
measure $\mu$ is supported on the set $\Om_{\infty}$, it follows that $I_m>0$
for $m$ large enough.

Given integers $m_1,m_2>0$, the beginning of length $m_1+m_2$ of any
sequence $\om\in\Om$ is represented as the concatenation of two words, the
beginning of length $m_1$ of the same sequence and the beginning of length
$m_2$ of the sequence $\tau^{m_1}(\om)$.  Therefore $T_{m_1+m_2}(\om)\ge
T_{m_1}(\om)+T_{m_2}(\tau^{m_1}(\om))$.  Integrating this inequality over
$\Om$ and using shift-invariance of the measure $\mu$, we obtain
$I_{m_1+m_2}\ge I_{m_1}+I_{m_2}$.  Now the standard argument implies that
$I_m/m\to I$ as $m\to\infty$, where $I=\sup_{k\ge1}I_k/k$.  Note that
$0<I\le1/3$.

Let $\Om_\mu$ denote the Borel set of all sequences $\om\in\Om$ such that for any
integer $m>0$ we have
$$\lim_{k\to\infty} \frac1k\sum\nolimits_{i=0}^{k-1} T_m(\tau^i(\om))=I_m.$$
Birkhoff's ergodic Theorem implies that $\Om_\mu$ is a set of full measure:
$\mu(\Om_\mu)=1$.
Consider an arbitrary $\om\in\Om$ and integers $m>0$ and $k\ge2m$.  Let
$l=\lfloor k/m\rfloor$, the integer part of $k/m$.  For any integer $j$, $0\le j<m$, we represent the
beginning of length $k$ of $\om$ as the concatenation of $l$ words
$w_0w_1\dots w_l$, where $w_0$ is of length $j$, $w_l$ is of length
$k-lm+m-j$, and the other words are of length $m$.  By the above,

$$T_k(\om)-l\le\sum\nolimits_{i=0}^l T(w_i)\le T_k(\om).$$

By construction, $T(w_i)=T_m(\tau^{(i-1)m+j}(\om))$ for $1\le i\le l-1$.
Besides, $l\le k/m$ and $0\le T(w_0)+T(w_l)\le (k-lm+m)/3<2m/3$.  Therefore

$$T_k(\om)-k/m-2m/3\le\sum\nolimits_{i=1}^{l-1} T_m(\tau^{(i-1)m+j}(\om))\le
T_k(\om).$$
Summing the latter inequalities over $j$ ranging from $0$ to $m-1$, we
obtain

$$mT_k(\om)-k-2m^2/3\le\sum\nolimits_{i=0}^{(l-1)m-1} T_m(\tau^i(\om))\le mT_k(\om).
$$
Since $0\le\sum_{i=(l-1)m}^{k-1}T_m(\tau^i(\om))\le (k-lm+m)m/3<2m^2/3$, it
follows that

$$mT_k(\om)-k-2m^2/3\le\sum\nolimits_{i=0}^{k-1} T_m(\tau^i(\om))\le mT_k(\om)+2m^2/3.$$
Then

$$
\biggl|\frac1k T_k(\om)-\frac1{mk}\sum\nolimits_{i=0}^{k-1}T_m(\tau^i(\om))
\biggr|\le \frac1m+\frac{2m}{3k}.$$

At this point, let us assume that $\om\in\Om_\mu$.  Fixing $m$ and
letting $k$ go to infinity in the latter estimate, we obtain that all limit
points of the sequence $\{T_k(\om)/k\}_{k\ge1}$ lie in the interval
$[I_m/m-1/m,I_m/m+1/m]$.  Letting $m$ go to infinity as well, we obtain
that $T_k(\om)/k\to I$ as $k\to\infty$.

Given $\om\in\Om_{\infty}$, there is a simple relation between sequences
$\{T_m(\om)\}_{m\ge1}$ and $\{t_n(\om)\}_{n\ge1}$.  Namely, $t_n(\om)\le m$
if and only if $T_m(\om)\ge n$.  In particular, $T_{t_n(\om)}(\om)=n$.
Since $T_m(\om)/m\to I$ as $m\to\infty$ for any $\om\in\Om_\mu$, it
easily follows that $t_n(\om)/n\to C_0$, where $C_0=I^{-1}$, for any
$\om$ in $\Om_\mu\cap\Om_{\infty}$, a set of full measure.

Now consider the case $\mu$ is the uniform Bernoulli measure.  To estimate
the limit $C_0$ in this case, we are going to evaluate the integral $I_7$.
For any integer $k\ge0$ let $N_k$ denote the number of words $w$ of length
$7$ over the alphabet $\{0,1,2\}$ such that $T(w)=k$.  Then
$I_7=3^{-7}\sum_{k\ge0}kN_k$.  Since $N_k=0$ for $k>2$, we have
$N_0+N_1+N_2=3^7$ and $I_7=(N_1+2N_2)/3^7$.  Let us compute the numbers
$N_0$ and $N_2$.  A word $w$ of length $7$ satisfies $T(w)=0$ if it does
not use one of the letters.  The number of words missing one particular
letter is $2^7$.  Also, there are three words $0000000$, $1111111$, and
$2222222$ that miss two letters.  It follows that $N_0=3\cdot2^7-3=381$.
To compute $N_2$, we represent an arbitrary word $w$ of length $7$ as
$w_1lw_2$, where $w_1$ and $w_2$ are words of length $3$ and $l$ is a
letter.  There are two cases when $T(w)=2$.  In the first case, each of the
words $w_1$ and $w_2$ contains all letters, then $l$ can be arbitrary.  In
the second case, either $w_1$ or $w_2$ misses exactly one letter, then $l$
must be the missing letter and the other word must contain all letters.  It
follows that $N_2=(3!)^2\cdot 3+2M\cdot 3!$, where $M$ is the number of
words of length $3$ that miss exactly one of the letters $0$, $1$, and $2$.
It is easy to observe that $M=3^3-3!-3=18$, then $N_2=324$.  Now
$N_1=3^7-N_0-N_2=2187-381-324=1482$.  Finally, $I_7=(N_1+2N_2)/3^7
=(1482+2\cdot324)/3^7=710/729$.  Now we can estimate the limits.
As shown earlier, $I\ge I_7/7=710/(7\cdot729)>100/729$, then
$C_0=I^{-1}<7.3$.

\end{proof}

Now we are ready to complete the proof of Theorem \ref{ana1}.

 Take any $C>C_0$, where $C_0$ is as in Lemma \ref{l-th}. By Theorem
\ref{tek},  there exists $\epsilon >0$ and $0<\alpha<1$ such that $\gamma_\om(n) \preceq e^{n^{\alpha}}$
for all $\om \in \Om_{C,\eps}$. The set $\Om_{C,\eps}$ has full measure by Lemma \ref{l-th}.
In the case when $\mu$ is the uniform Bernoulli measure, we can assume that $C<7.3$ by Lemma \ref{l-th}.
Consequently, we can choose  $\alpha= 1-\kappa$, where $\kappa=\log\frac{132}{131}/\log(11\cdot2^{7.3})$.  
One can compute that $\kappa>0.001$.
\begin{flushright}
 $\square$
\end{flushright}

%
%
%
%
%


\section{Proof of Theorem \ref{main2}} 

 Recall that we are in the case $p=2$ so that we use the notation
 $\Om=\Om_2$ and $\Om_0=\Om_{2,0}$. Also, recall $\theta_0$ is as defined before Theorem \ref{main2}.
We begin with  preliminary lemmas.

\begin{lem}
\label{lower}

Let $g$ be a function of natural argument and let $\m{L}_g\subset \m{M}_k$ be the subset  consisting of marked groups
$(G,S)$ such that $g\npreceq \gamma_G^S$. Then $\m{L}_g$ is a $G_\delta$ subset of $\m{M}_k$ (i.e., a countable 
intersection of open sets).

 
\end{lem}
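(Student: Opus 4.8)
The plan is to show that the set $\m{L}_g=\{(G,S)\in\m{M}_k \mid g\npreceq\gamma_G^S\}$ is $G_\delta$ by unwinding the definition of $\npreceq$ and exploiting the local nature of growth functions in the space of marked groups. Recall from the preliminaries that $g\preceq\gamma_G^S$ means there is a constant $C$ with $g(n)\le\gamma_G^S(Cn)$ for all $n$. Negating this, $g\npreceq\gamma_G^S$ asserts that for \emph{every} constant $C\in\bN$ there exists some $n\in\bN$ with $g(n)>\gamma_G^S(Cn)$. The key structural point I would use is the remark stated earlier in the excerpt: if two marked groups $(G,S),(H,T)\in\m{M}_k$ are at distance at most $2^{-m}$, then $\gamma_G^S(n)=\gamma_H^T(n)$ for all $n\le m$. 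In other words, the value $\gamma_G^S(N)$ for a fixed $N$ is a \emph{locally constant} (hence continuous) function of the marked group, since it is determined by the ball of radius $N$ in the Cayley graph.

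The main steps would be as follows. First I would rewrite the condition defining $\m{L}_g$ as a countable intersection over $C\in\bN$ of the sets
$$
\m{U}_C=\{(G,S)\in\m{M}_k \mid \exists\, n\in\bN \text{ with } \gamma_G^S(Cn)<g(n)\}.
$$
Thus $\m{L}_g=\bigcap_{C\in\bN}\m{U}_C$, and it suffices to show each $\m{U}_C$ is open, since a countable intersection of open sets is $G_\delta$. Second, I would write each $\m{U}_C$ as a countable \emph{union}
$$
\m{U}_C=\bigcup_{n\in\bN}\{(G,S)\in\m{M}_k \mid \gamma_G^S(Cn)<g(n)\},
$$
and observe that it is enough to prove that each individual set $\m{V}_{C,n}=\{(G,S)\mid\gamma_G^S(Cn)<g(n)\}$ is open, because an arbitrary union of open sets is open. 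Third, fixing $C$ and $n$, set $N=Cn$; by the locality remark above, for any $(G,S)\in\m{V}_{C,n}$ the entire open ball of radius $2^{-N}$ around $(G,S)$ in $\m{M}_k$ consists of marked groups $(H,T)$ with $\gamma_H^T(N)=\gamma_G^S(N)<g(n)$, so this ball lies inside $\m{V}_{C,n}$. Hence $\m{V}_{C,n}$ is open, which completes the chain: each $\m{V}_{C,n}$ is open, so each $\m{U}_C$ is open, so $\m{L}_g$ is $G_\delta$.

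The only mild subtlety I would need to address carefully is the direction of the quantifier on $C$ versus $n$: the \emph{intersection} (over $C$, the universally quantified variable) of open sets yields $G_\delta$, while the \emph{union} (over $n$, the existentially quantified variable) of open sets stays open. Getting these two roles correct is the crux, but it is purely a matter of bookkeeping once the definition of $\preceq$ is negated correctly. I expect the genuinely load-bearing ingredient to be the locality of growth-ball sizes in $\m{M}_k$, and this has already been recorded in the preliminaries, so the proof reduces to the combinatorial unwinding just described and requires no further analytic input.
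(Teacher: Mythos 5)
Your proof is correct and follows essentially the same route as the paper: both write $\m{L}_g$ as $\bigcap_{C\in\bN}\m{U}_C$ with each $\m{U}_C$ open because the value $\gamma_G^S(N)$ for fixed $N$ is locally constant on $\m{M}_k$ (the paper exhibits $\m{U}_C$ as a union of metric balls centered at points of $\m{L}_g$, while you decompose it as a countable union of the sets $\m{V}_{C,n}$ — an immaterial difference). No gaps.
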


\begin{proof}

Given $(G,S)\in \m{L}_g$ and $C \in \mathbb{N}$, let $K=K((G,S),C)$ be
such that $\gamma_G^S(CK)<g(K)$ (such $K$ exists since $g \npreceq \gamma_G^S$).
 Let $\m{B}_{((G,S),C)}$ denote the ball of radius $2^{-CK}$ 
 (in the metric defined in the space of marked groups) centered at $(G,S)$. 
  We claim that 
  $$\m{L}_g = \bigcap_{C\in \mathbb{N}} \bigcup_{(G,S)\in \m{L}_g}\m{B}_{((G,S),C)}  .$$ 
The inclusion $\subset$ is clear. For the other inclusion, let $(H,T)$ be an element of the right hand side.
Then for any $C\in \mathbb{N}$ there is $(G,S)\in \m{L}_g$ such that 
 $(H,T)\in \m{B}_{((G,S),C)} $. Therefore for $K=K((G,S),C)$ we have $\gamma_H^T(CK)=\gamma_G^S(CK)<g(K)$
 and hence $g \npreceq \gamma_H^T$, which shows that $(H,T)\in \m{L}_g$.


\end{proof}

\begin{lem}
\label{upper}

Let $f$ be a function of natural argument and let $\m{U}_f\subset \m{M}_k$ be the subset consisting of marked groups
$(G,S)$ such that $\gamma_G^S \npreceq f$. Then $\m{U}_f$ is a $G_\delta$ subset of $\m{M}_k$.

 
\end{lem}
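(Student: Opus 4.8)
The plan is to mirror the proof of Lemma \ref{lower}, dualizing the roles of the two growth functions. First I would unwind the definition of $\preceq$: the condition $\gamma_G^S \npreceq f$ means that for every $C \in \mathbb{N}$ there exists an integer $k$ with $\gamma_G^S(k) > f(Ck)$. Thus
$$\m{U}_f = \{(G,S) \mid \forall C \in \mathbb{N}\ \exists k\colon \gamma_G^S(k) > f(Ck)\},$$
and the goal is to realize this set as a countable intersection, indexed by $C$, of open sets.

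Second, for each $(G,S) \in \m{U}_f$ and each $C \in \mathbb{N}$ I would fix a witness $k = k((G,S),C)$ with $\gamma_G^S(k) > f(Ck)$, and let $\m{B}_{((G,S),C)}$ denote the open ball of radius $2^{-Ck}$ about $(G,S)$ in the marked-group metric. The key point is that, for fixed $C$, the strict inequality $\gamma_G^S(k) > f(Ck)$ is an \emph{open} condition. Indeed, any $(H,T)$ with $d((H,T),(G,S)) < 2^{-Ck}$ admits a canonical identification extending to an isomorphism of the balls of radius $\ge Ck \ge k$ in the two Cayley graphs, so $\gamma_H^T(k) = \gamma_G^S(k)$. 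Since $f(Ck)$ does not depend on the group, $(H,T)$ inherits the witness: $\gamma_H^T(k) > f(Ck)$.

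Third, I would establish the identity
$$\m{U}_f = \bigcap_{C \in \mathbb{N}}\ \bigcup_{(G,S) \in \m{U}_f} \m{B}_{((G,S),C)}.$$
The inclusion $\subset$ is immediate, since each $(G,S) \in \m{U}_f$ lies in its own ball $\m{B}_{((G,S),C)}$ for every $C$. For $\supset$, take $(H,T)$ in the right-hand side; for each $C$ there is some $(G,S) \in \m{U}_f$ with $(H,T) \in \m{B}_{((G,S),C)}$, and by the previous step the associated witness $k$ gives $\gamma_H^T(k) > f(Ck)$. As this holds for every $C$, we conclude $(H,T) \in \m{U}_f$. Each inner union is open, so the displayed set is $G_\delta$, which finishes the proof.

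The argument is essentially formal once the definitions are in place, and there is no genuine analytic obstacle. The only point demanding care is the bookkeeping in the marked-group metric: one must confirm that proximity of radius $2^{-Ck}$ is enough to freeze the growth function at the argument $k$ that carries the witness (here the larger argument $Ck$ enters only through the group-independent value $f(Ck)$), and that it is precisely the \emph{strictness} of the inequality that makes the defining condition open for each fixed $C$. This is exactly parallel to the choice of radius $2^{-CK}$ in Lemma \ref{lower}.
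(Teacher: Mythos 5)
Your proof is correct and is precisely the dualization that the paper intends: its own proof of Lemma \ref{upper} consists of the single sentence that it is analogous to Lemma \ref{lower}, and you have filled in that analogy accurately (including the harmless use of the radius $2^{-Ck}$ where $2^{-k}$ would already suffice, since only $\gamma_G^S(k)$ needs to be frozen while $f(Ck)$ is group-independent).
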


\begin{proof}
The proof is analogous to the proof of Lemma \ref{lower}.

%


\end{proof}

Now we are going to prove each part of Theorem \ref{main2}.

\medskip
 
 \textbf{Part a)} Suppose we are given $\theta>\theta_0$ and a function 
 $f(n)\prec e^n$.
 Let $\eta=(012)^\infty\in \Om$ and recall that we have $\gamma_\eta(n) \preceq e^{n^{\theta_0}}$
 (Theorem \ref{general}, part 7).
 Consider the set  $\m{X}=\{(G_\om,S_\om) \mid \tau^k(\om)=\eta \;\; \text{for some} \; k \}\subset \m{G}_2$.
 Since $\m{G}_2$ is homeomorphic to $\Om$ via
 $(G_\om,S_\om)\mapsto \om$, the set $\m{X}$  is dense in $\m{G}_2$. For any
 $\om \in \Om \setminus \Om_0$, the groups $G_\om$ and $G_{\tau(\om)}\times G_{\tau(\om)}$ are commensurable  
by \cite[Theorem 2.2]{grigorch:degrees}. Therefore  we have
 $$\gamma_\om \sim \gamma_{\tau(\om)}^2 $$
 and for any $\om$, $\tau^k(\om)=\eta$ it follows that
 $$\gamma_\om\sim \gamma_\eta^{2^k} \preceq (e^{n^{\theta_0}})^{2^k}\sim
  e^{n^{\theta_0}}. $$
  Let $g(n)=e^{n^\theta}$ so that $\m{X} \subset \m{L}_g$, where $\m{L}_g$ is defined in Lemma \ref{lower}.
  
  According to Theorem \ref{general}, part 2,
the set  $\m{Y}=\{(G_\om,S_\om) \mid  \om \in \Om_0\}$, which is dense in $\m{G}_2$, 
consists of groups of exponential growth. In particular, $\m{Y}\subset \m{U}_f$, where
$\m{U}_f\subset \m{M}_3$ is defined in Lemma \ref{upper}. 
  By Lemmas \ref{lower} and \ref{upper}, the sets $\m{L}_g \cap \m{G}_2$ and $\m{U}_f\cap \m{G}_2$
  are dense $G_\delta$ subsets of $\m{G}_2$. Since $\m{G}_2$ is compact, their intersection is also 
a dense $G_\delta$ subset of $\m{G}_2$. For any $(G,S)\in \m{L}_g\cap\m{U}_f\cap\m{G}_2$, we have 
$g \npreceq \gamma_G^S$ and $\gamma_G^S \npreceq f$.
  
  \smallskip 
  
  \textbf{Part b)} This part is a corollary of part a) with $f(n)=e^{\frac{n}{\log n}}$ as $e^{n^\beta} \prec f(n) $
for any $\beta<1$.
  
  \smallskip
  
  \textbf{Part c)} The proof of this part is analogous to part a).
  Let us denote by $\Om'=\{0,1\}^\mathbb{N}\subset \Om$.
  We  set $\zeta=(01)^\infty$, 
  $\m{X}=\{(G_\om,S_\om) \mid \tau^k(\om)=\zeta \;\; \text{for some} \; k \}$
  and $\m{Y}=\{(G_\om,S_\om) \mid \om \in \Om_0\cap\Om'\}$. According to  Theorem \ref{general}, part 8,
  for any $\eps>0$ the function $g(n)=\exp\left(\frac{n}{\log^{1-\epsilon}n}\right)$ grows faster than
$\gamma_\zeta$. Since $g\sim g^2$, it follows that $\gamma_\om \preceq g$ whenever $(G_\om,S_\om) \in \m{X}$.
 It remains to apply Lemmas \ref{lower} and \ref{upper}.

\section{Proof of theorems \ref{ana1}' and 3' }
\label{primetheorems}

As it was mentioned in the introduction there is a natural embedding 
$\iota_k : \m{M}_k \rightarrow \m{M}_{k+1}$ given by $\iota_k((G,A))=(G,A')$
where $A'=\{a_1,\ldots,a_k,a_{k+1}\}$ if $A=\{a_1,\ldots,a_k\}$
and $a_{k+1}=1$ in $G$. This induces an embedding $\iota_{k,n}:\m{M}_k \rightarrow \m{M}_{k+n}$
for all $k,n$ and given a subset $X\subset \m{M}_k$ one can consider its homeomorphic
image $\iota_{k,n}(X)\subset \m{M}_{k+n}$.

There are two natural ways of replacing one generating set $A$ of a group $G$ by another.
The one, as was just suggested, by adding one more formal generator representing the identity
(and placing it for definiteness at the end), or applying to a generating set  
\emph{Nielsen transformations} which are given by (see \cite{mks}):

\begin{itemize}
\item[i)] Exchanging two generators ,
 \item[ii)] Replacing a generator $a\in A$ by its inverse $a^{-1}$,
 \item[iii)] Replacing $a_i\in A$ by $a_ia_j$ where $a_i\neq a_j$.

\end{itemize}
Note that these transform generating sets into generating sets.
It is in general incorrect that two generating sets of size $k$ of a group are related by a 
sequence of Nielsen transformations (i.e. by an automorphism of the free group $F_k)$, but
we have the following:

\begin{prop}
 Let $(G,A)\in\m{M}_k$ and $(G,B)\in \m{M}_n$. Let $\iota_{k,n}(G,A)=(G,A')$ and $\iota_{n,k}(G,B)=(G,B')$.
  so that $(G,A'),(G,B')\in \m{M}_{n+k}$. Then $A'$ can be transformed into $B'$ by a sequence
  of Nielsen transformations.
\end{prop}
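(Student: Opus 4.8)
The plan is to transform the tuple $A'=(a_1,\dots,a_k,\underbrace{e,\dots,e}_{n})$ into $B'=(b_1,\dots,b_n,\underbrace{e,\dots,e}_{k})$ by an explicit sequence of moves, using the redundant identity entries as scratch space. The first thing I would record is a standard consequence of (i)--(iii): for any two \emph{distinct} positions $p\neq q$ one can right-multiply the entry $g_p$ by $g_q^{\pm1}$ while leaving $g_q$ (and every other entry) unchanged. Right multiplication by $g_q$ is precisely (iii), which applies since the two positions are distinct; right multiplication by $g_q^{-1}$ is obtained by bracketing (iii) between two applications of (ii) to position $q$ (invert $g_q$, apply (iii), then invert $g_q$ back). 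Iterating such moves lets me replace the entry in a chosen position by that entry times an arbitrary prescribed word in the other entries, with no net change to the positions used as sources. (Formally these are elementary Nielsen moves on the free group $F_{n+k}$ pushed forward to $G$, so the condition ``$a_i\neq a_j$'' in (iii) is the requirement that the two positions be distinct.)

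With this tool I would proceed in three phases. \emph{Phase 1.} Since $A$ generates $G$, each $b_j$ is a word in $a_1,\dots,a_k$; so for $j=1,\dots,n$ I build $b_j$ in the $(k+j)$-th slot of $A'$ (which currently holds $e$) by successively right-multiplying that slot by the appropriate $a_i^{\pm1}$, drawing on positions $1,\dots,k$ as sources. These moves never permanently alter positions $1,\dots,k$ nor the other trailing slots, so the tuple becomes $(a_1,\dots,a_k,b_1,\dots,b_n)$. \emph{Phase 2.} Now $b_1,\dots,b_n$ occupy positions $k+1,\dots,k+n$ and generate $G$, hence each $a_i$ is a word in the $b_j$. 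For $i=1,\dots,k$ I right-multiply the $i$-th slot (holding $a_i$) by $a_i^{-1}$, written as a word in the $b_j$ and drawing on positions $k+1,\dots,k+n$ as sources; this turns that slot into $e$ while leaving the $b$-block intact, yielding $(\underbrace{e,\dots,e}_{k},b_1,\dots,b_n)$. \emph{Phase 3.} Finally I use swaps (i) to move the block $b_1,\dots,b_n$ to the front, arriving exactly at $B'$. Note that the counts fit perfectly: the $n$ identity slots of $A'$ accommodate precisely $b_1,\dots,b_n$, and the $k$ slots cleared in Phase~2 are exactly the $k$ trailing identities of $B'$.

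The entire content of the argument lies in the setup paragraph: the one point needing care is that (i)--(iii) really do let one multiply a chosen entry by an arbitrary word in the remaining entries and their inverses, with the source entries returned to their original values (for which, conveniently, only right multiplication is needed, since each target word is built left-to-right starting from $e$ or from $a_i$). Once this is verified, the three phases are forced by the only hypotheses used, namely that both $A$ and $B$ generate $G$. I expect the sole bookkeeping obstacle to be checking that at every elementary step the two indices involved are distinct, so that (iii) is legitimately applicable; but this holds automatically here, since in Phase~1 the target slot lies among $k+1,\dots,k+n$ while the sources lie among $1,\dots,k$, and in Phase~2 the roles are exactly reversed.
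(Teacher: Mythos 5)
Your proof is correct and is essentially the paper's argument: both use the padded identity slots as scratch space in which to build the other generating set by elementary Nielsen moves, with the ``$a_i\neq a_j$'' condition read (correctly) as distinctness of positions in $F_{n+k}$. The only difference is that you push all the way from $A'$ to $B'$ in one direction (your Phase~2 clears the $a$-block explicitly), whereas the paper transforms $A'$ and $B'$ separately into $(a_1,\dots,a_k,b_1,\dots,b_n)$ and $(b_1,\dots,b_n,a_1,\dots,a_k)$ and notes these differ by a permutation, implicitly using the invertibility of Nielsen transformations to compose the two sequences.
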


\begin{proof}
 Let $A'=\{a_1,\ldots,a_k,a_{k+1},\ldots,a_{k+n}\}$ and
   $B'=\{b_1,\ldots,b_n,b_{n+1},\ldots,b_{n+k}\}$ where 
  $a_{k+1}=\ldots =a_{k+n}=b_{n+1},\ldots =a_{n+k}=1$.
  For all $b_i,\; 1\le i \leq n$ there is a word $B_i \in \{a_1,\ldots,a_k\}^\pm$ such that
  $b_i=B_i$. By a sequence of Nielsen transformations (acting trivially on $a_i,\; i \leq k $) we can transform $A'$
  into $A''=\{a_1,\ldots,a_k,B_1,\ldots,B_n\}=\{a_1,\ldots,a_k,b_1,\ldots,b_n\}$. 
  In a similar way $B'$ can be transformed into $B''=\{b_1,\ldots,b_n,a_1,\ldots,a_k\}$
  by a sequence of Nielsen transformations.
  It is clear that $B''$ can be obtained from $A''$ by permuting the generators, which 
  can be achieved by a sequence of Nielsen transformations.
\end{proof}

Taking the inductive limit $\m{M}=\lim_{\rightarrow} \m{M}_k$ and setting 
$A_\infty=\{a_1,a_2,\ldots\}$ the previous proposition shows 
(as observed by Champetier in \cite{champ:grps_fini}) that
the group of Nielsen transformations over an infinite alphabet  
(that is, the group $Aut_{fin}(F_\infty)$ of finitary automorphisms
of a free group $F_\infty$ of countably infinite rank) acts on $\m{M}$
in such a way that if two pairs $(G,A),(G,B)\in \m{M}$ represent the same group
then they belong to the same orbit of the action of $Aut_{fin}(F_\infty)$ on $\m{M}$
(and it is clear the points in the orbit all represent the same group). In \cite{champ:grps_fini}
it was shown that this action, which is by homeomorphisms and hence is Borel, is not
\emph{tame} (in other terminology not measurable, or not smooth). As was mentioned
in the introduction, the question of existence of $Aut_{fin}(F_\infty)$-invariant 
(or at least quasi-invariant) measure is important for the topic of random groups.

There are more general ways of embedding $\m{M}_k$ into $\m{M}_l$. Assume we have 
a subset $X=\{(G_i,A_i) \mid i \in I \} \subset \m{M}_k$ where 
$A_i=\{a^{(i)}_1,\ldots,a^{(i)}_k\}$. Let $F_k$ be a free group  on $\{a_1,\ldots,a_k\}$
and suppose that there are words $B_j(a_1,\ldots,a_k) \in F_k$ for $1\leq j\leq m$
such that for all $i\in I$, the set 
$$B_i=\{B_1(a^{(i)}_1,\ldots,a^{(i)}_k),\ldots,B_m(a^{(i)}_1,\ldots,a^{(i)}_k)  \}$$
is a generating set for $G_i$. Let $Y=\{(G_i,B_i) \mid i \in I \}\subset \m{M}_m$.

\begin{prop}
 The map $\varphi : X \rightarrow Y$ given by $\varphi((G_i,A_i))=(G_i,B_i)$
 is a homeomorphism.
 \label{prop2}
\end{prop}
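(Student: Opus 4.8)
The plan is to reformulate everything through the identification of $\m{M}_k$ with the space $\m{N}(F_k)$ of normal subgroups of the free group, where the topology is governed by agreement on balls $B_{F_k}(n)$, and to realize $\varphi$ as the map induced by a single substitution homomorphism. Let $F_k$ be free on $x_1,\dots,x_k$ and $F_m$ free on $y_1,\dots,y_m$, and let $\Phi\colon F_m\to F_k$ be the homomorphism $y_j\mapsto B_j(x_1,\dots,x_k)$. For $(G_i,A_i)\in X$ let $N_i\vartriangleleft F_k$ be the kernel of the canonical map $x_l\mapsto a^{(i)}_l$. The composition $F_m\xrightarrow{\Phi}F_k\to G_i$ sends $y_j$ to $B_j(a^{(i)}_1,\dots,a^{(i)}_k)$, i.e.\ to the $j$-th element of the marking $B_i$; since $B_i$ generates $G_i$ this composition is onto, and its kernel is $\Phi^{-1}(N_i)$. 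Thus under the identifications $(G_i,A_i)\leftrightarrow N_i$ and $(G_i,B_i)\leftrightarrow\Phi^{-1}(N_i)$, the map $\varphi$ is exactly $N\mapsto\Phi^{-1}(N)$. In particular $\varphi$ is well defined (an isomorphism carrying $A_i$ onto $A_{i'}$ carries $B_i$ onto $B_{i'}$), and it is surjective by the very definition of $Y$.

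First I would prove continuity, which is the computational heart of the argument but is essentially routine once $\Phi$ is in place. Put $L=\max_j|B_j|$, the largest length of the substituted words. If $u\in F_m$ has length at most $r$, then $\Phi(u)$ has length at most $rL$ in $F_k$. Hence for any $u\in B_{F_m}(r)$ and any $N\vartriangleleft F_k$ one has $u\in\Phi^{-1}(N)$ if and only if $\Phi(u)\in N\cap B_{F_k}(rL)$. Consequently, if two kernels $N,N'$ agree on $B_{F_k}(rL)$, then $\Phi^{-1}(N)$ and $\Phi^{-1}(N')$ agree on $B_{F_m}(r)$. In the metric of $\m{M}_m$ this reads $d\bigl(\varphi(N),\varphi(N')\bigr)\le 2^{-r}$ whenever $d(N,N')\le 2^{-rL}$, so $\varphi$ is uniformly continuous.

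It remains to establish injectivity together with continuity of the inverse, and this is where I expect the genuine obstacle to lie: $\Phi$ need not be surjective, and a priori two different $A$-markings of the same group may yield the same $B$-marking, so injectivity does not follow formally from the generating hypothesis alone. I would resolve it by invoking the extra structure present in the intended applications, namely that each original generator $a_l$ is a fixed word $C_l(B_1,\dots,B_m)$ in the new generators, the \emph{same} word for every $i\in I$ (for the groups $G_\om$ this holds because $A_\om$, $S_\om$ and $L_\om$ are mutually expressible by words independent of $\om$). These words define a substitution $\Psi\colon F_k\to F_m$, $x_l\mapsto C_l$. Since $\Phi\circ\Psi$ and $\Psi\circ\Phi$ induce the identity on each $G_i$, the maps $\Phi^{-1}$ and $\Psi^{-1}$ fix $N_i$ and $\Phi^{-1}(N_i)$ respectively, so $M\mapsto\Psi^{-1}(M)$ is a two-sided inverse of $\varphi$; it is continuous by the identical word-length estimate (with $L'=\max_l|C_l|$), which gives both injectivity and bicontinuity, hence the homeomorphism. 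As an alternative for the applications, once injectivity is known one may simply observe that $X$ is compact and $\m{M}_m$ is Hausdorff, so that any continuous bijection is automatically a homeomorphism.
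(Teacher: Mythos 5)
Your proof is correct (granting the extra hypothesis you introduce), but it takes a genuinely different route from the paper's. The paper stabilizes: it embeds $X$ and $Y$ into $\m{M}_{k+m}$ via the maps $\iota$ that adjoin trivial generators, invokes the preceding proposition to produce a single automorphism of $F_{k+m}$ (a product of Nielsen transformations) carrying each stabilized $A$-marking to the corresponding stabilized $B$-marking, and concludes because automorphisms of a free group act on $\m{M}_{k+m}\cong\mathcal{N}(F_{k+m})$ by homeomorphisms; $\varphi$ is then a restriction of a composition of homeomorphisms, so no continuity estimate is ever written down. You instead work directly with the substitution homomorphism $\Phi\colon F_m\to F_k$, identify $\varphi$ with $N\mapsto\Phi^{-1}(N)$, and verify uniform continuity by the word-length bound $|\Phi(u)|\le L|u|$; this is more elementary and self-contained, at the cost of an explicit modulus-of-continuity computation that the paper's formalism absorbs into the statement that $\mathrm{Aut}_{fin}(F_\infty)$ acts by homeomorphisms. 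The most valuable part of your write-up is the observation that injectivity genuinely fails under the stated hypothesis alone (that each $B_i$ generates $G_i$): one also needs the old generators to be expressible in the new ones by words $C_l$ independent of $i$ (your $\Psi$). The paper's proof quietly relies on the same assumption --- a single automorphism of $F_{k+m}$ can carry every $A_i'$ to the corresponding $B_i'$ only if the expressing words in \emph{both} directions are uniform in $i$ --- and the assumption does hold in all of the paper's applications ($d_\om=b_\om c_\om$, etc.), but your argument makes explicit a point the paper leaves implicit.
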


\begin{proof}
 Let $X'=\iota_{k+m}(X)$ and $Y'=\iota_{m+k}(Y)$. By the previous proposition, there is an automorphism
 of $F_{k+m}$ (realized by a sequence of Nielsen transformations) which 
 induces a homeomorphism $\phi$ of $\m{M}_{k+m}$  which maps $X'$ onto  $Y'$. It is 
 clear that $\varphi$  is the restriction of $\iota_{m+k}^{-1}\circ \phi \circ \iota_{k+m}$
 to $X$.

\end{proof}

We are ready to prove the theorems.

Theorems \ref{ana1} and \ref{main2}
show that theorems 1' and 3' hold for $k=4$. Using the previous propositions
it immediately follows that they hold for values $k\geq 4$.

For $k=3$ observe that by virtue of equations (\ref{basicrels}),
for every $\om \in \Om$  we have $d_\om=b_\om c_\om$ and hence the groups $G_\omega$
are generated by $\{a,b_\om,c_\om\}$. Therefore by proposition \ref{prop2} we
obtain the result for $k=3$.

The case $k=2$ is more delicate. There are several methods of embedding a group into
a $2$-generated group.  We need an embedding that preserves the property
to have intermediate growth. 
To accomplish this we use an idea from \cite{grigorch:degrees}. Let $T$ be the rooted tree with branch index 4,2,2,2,\ldots. Given $\om \in \Om$,
let $e$ be the automorphism of $T$ which cyclically permutes the first level
vertices and let $f_\om$ be the automorphism given by $(b_\om,c_\om,a,1)$. 
Set $M_\om=\langle x,y_\om \rangle$. This gives an embedding 

$$
\begin{array}{cccccc}
 \psi: M_\om  & \rightarrow &  S_4 & \ltimes & G_\om^4 \\
 x & \mapsto & \sigma  & &(1,1,1,1) \\
 y_\om & \mapsto &  & &(b_\om,c_\om,a,1)  
\end{array}
$$
where $\sigma$ is the cyclic permutation of order 4 in $S_4$. 
Let $$\bar M_\om=\langle y_\om, xy_\om x^{-1},x^2y_\om x^{-2},x^3y_\om x^{-3} \rangle$$
 and observe that $\bar M_\om$ has index 4 in $M_\om$. The equalities
 
 $$
 \begin{array}{ccc}
  \psi(y_\om)  & = & (b_\om,c_\om,a,1)  \\
  \psi(x y_\om  x^{-1}) &  = &  (c_\om,a,1,b_\om) \\
  \psi(x^2 y_\om x^{-2}) &  = & (a,1,b_\om,c_\om) \\
  \psi(x^3 y_\om x^{-3}) & =& (1,b_\om,c_\om,a)
 \end{array}
 $$
 show that $\bar M_\om$ is a sub-direct product of $G_\om^4$. 
 Hence if $G_\om$ has intermediate growth so does $M_\om$ and if the growth
 of $G_\om$ is bounded above by a function of the form $e^{n^\alpha}$
 then the same holds for the growth function of $M_\om$. Similarly, if $G_\om$ has 
oscillating growth of type $(e^{n^\theta},f)$, so does $M_\om$.
 
 One can observe that the branch algorithm solving the word problem for groups 
 $G_\om$ (described in \cite{grigorch:degrees}) can be adapted 
 to the groups $M_\om$: The covering group will be 
 $\mathbb{Z}_4 \ast \mathbb{Z}_2$, given a word $g$ in the normal form in  
 $\mathbb{Z}_4 \ast \mathbb{Z}_2$ first one checks whether the exponent of $e$
 in $g$ is divisible by 4 or not. If not then the element $g$ does not belong
 to the first level stabilizer and hence $g\neq 1$. Otherwise
 one computes the sections of $g$ and then applies the classical branch algorithm
 to the sections of $g$ with oracle $\om$.
  This shows that  for two sequences $\om,\eta \in \Om \setminus \Om_0$ which have
 common prefix of length $n$, the Cayley graphs of the  groups $M_\om$ and $M_\eta$ 
 will have isomorphic balls of radius $2^{n-1}$. 
 Therefore we consider the subset $X=\{(M_\om,L_\om) \mid \om \in \Om \setminus \Om_0\} \subset \m{M}_2$
 where $L_\om=\{x,y_\om\}$ and take its closure in $\m{M}_2$ to obtain a Cantor set in $\m{M}_2$.
 The new limit groups $ M_\om$ for $\om \in \Om_0$ will have a finite index subgroup
 which is a sub-direct product in the group $ G_\om^4$, and therefore are of exponential
 growth. Thus the limit groups $ M_\om,\;\om \in \Om_0$ will have exponential
 growth and therefore similar arguments used to prove Theorem   \ref{main2} can be applied 
 in this case too. Also note that when $\om \in \Om_{\infty}$ then $M_\om$ and 
 $G^4_\om$ are abstractly commensurable i.e. have finite index subgroups which are isomorphic.

 For $p\geq 3$ a similar construction can be done by setting $M_\om=\langle x,y_\om \rangle$
 as the group of automorphisms of the tree with branch index $p^2,p,p,\ldots$,
 where $x$ is the cyclic permutation of order $p$ and $y_\om=(b_\om,c_\om,a,1,\ldots,1)$. 
 One can observe that  $M_\om$ in this case is a sub-direct product in $G_\om^{p^2}$
 and $M_\om$ is abstractly commensurable with $G_{\tau(\om)}^{p^2}$ when $\om \in \Om_{p,\infty}$.
 This allows to prove the theorem in the case $p\geq3$.

\section{Concluding Remarks}

Let $G_\om^{um},\;\om \in \Om_{p,0}$ denote the unmodified groups as 
defined in Section 3 (i.e., the groups before modifying countably many groups corresponding
to eventually constant sequences). Note that for fixed prime $p$ we have
$G^{um}_{0^\infty}=G^{um}_{1^\infty}=\ldots=G^{um}_{p^\infty}$ as subgroups of the $p$-ary
rooted tree.
The limit groups $G_\om,\; \om \in \Om_{p,0}$ map onto the corresponding group $G_\om^{um}$.
When $p=2$ and $\om \in \Om_2$ is a constant sequence then $G_\om^{um}$
is isomorphic to the infinite dihedral group \cite[Lemma 2.1]{grigorch:degrees} and hence 
has linear growth. This shows that $G^{um}_\om$ has polynomial growth for $\om \in \Om_{2,0}$
For $p\geq 3$ and $\om \in \Om_p $  a constant sequence, 
the groups $G_\om^{um}$ were considered in \cite{grigorch:jibranch}
and were shown to be regular branch self-similar groups. As these groups are
residually finite $p$-groups, the main result of \cite{grigorch:hilbert} shows that  for all such groups
$e^{\sqrt{n}}$ is a lower bound for their growth functions. Therefore for all primes
$p$ and $\om \in \Om_{p,0}$, the groups $G_\om$ have super-polynomial growth.
As mentioned in Theorem \ref{general}, for $p=2$ the groups $G_\om\;\om \in \Om_{2,0}$
are known to have  exponential growth. An extension of this fact to $p>2$ would generalize
Theorem \ref{main2} to all primes $p$. For $p=3$ and $\om\in\Om_p$
a constant sequence, the group
$G^{um}_\om$ coincides with the Fabrykowski-Gupta group studied in \cite{fg91}.
In \cite{bartholdi-pochon} it was shown that the growth of this group satisfies
$$ e^{n^{\frac{\log3}{\log6}}} \preceq \gamma(n) \preceq e^{{\frac{n(\log\log n)^2}{\log n}}}.$$

%
%

A more general problem is the following:
Given two increasing functions $\gamma_1,\gamma_2$ such that 
$\gamma_i(n)\sim\gamma_i(n)^p,\;i=1,2$ and $\gamma_1(n) \prec \gamma_2(n)$, consider the set
$$\mathcal{W}_{\gamma_1,\gamma_2}=\{\om \in \Om_p \mid \gamma_1(n) 
\preceq \gamma_\om(n) \preceq \gamma_2(n)\} .$$
As mentioned before, for $\om \in\Om_p \setminus \Om_{p,0}$ the groups $G_\om$ and $G_{\tau(\om)}$
are commensurable and hence $\gamma_\om \sim \gamma^p_{\tau(\om)}$. This shows that 
$\mathcal{W}_{\gamma_1,\gamma_2}$ is $\tau$ invariant, and hence for any $\tau$ invariant 
ergodic measure $\mu$ defined on $\Om_p$ we have $\mu(\mathcal{W}_{\gamma_1,\gamma_2})=0\;\text{or}\;1$.
A natural direction for investigation would be to determine  functions $\gamma_1,\gamma_2$
for which the set $\mathcal{W}_{\gamma_1,\gamma_2}$ has full measure (and make $\gamma_1,\gamma_2$
as close to each other as possible while keeping $\mu(\mathcal{W}_{\gamma_1,\gamma_2})=1$).
Theorem \ref{ana1}, part (b) together with Theorem \ref{general}, part (3) can be interpreted as  
$\mu(\m{W}_{\gamma_1,\gamma_2})=1$,
where $\gamma_1(n)=e^{n^{0.5}},\gamma_2(n)=e^{n^{0.999}}$ and $\mu$
is the uniform Bernoulli measure on $\Om_2$.

The idea  of  statements  similar  to  Lemmas  \ref{lower}  and  \ref{upper}, which descends  to the paper
of   A. Stepin \cite{stepin96}, is based on the fact that many group properties are formulated in 
"local  terms" with respect to the topology on $\m{M}_k$.
This  includes     properties  such as to  be  amenable, to be LEK (locally embeddable  into  
 the class $K$  of groups), to be  sofic, to be  hyperfinite,  etc. (see, e.g., \cite{tullio:book}).

In  all  these and  other  cases   one  can state  that for  any $k$ the  subset 
$X_{\m{P}}\subset \m{M}_k$ of groups  satisfying   
a local property $\mathcal{P}$   is a $G_\delta$ set in $\m{M}_k$.  So   if a   subset  $Y \subset \m{M}_k$  has a dense subset of groups 
satisfying property $\m{P}$ then it  contains dense  $G_\delta$ subset satisfying  property  $\m{P}$. 
In  some  cases  like  
LEF (locally embeddable into finite groups), LEA (locally  embeddable into amenable  groups), sofic and hyperfinite
groups, 
the corresponding  set is  a  closed  subset  in $\m{M}_k$ and  there  is not  a big outcome of the above  argument.  
But  for   properties such  as to  be amenable, to  have particular  type of growth and  some other  
properties, the above  observation gives a nontrivial  information  about  the structure  of  the  space  of  groups.

\bibliographystyle{alpha}
\bibliography{mylib}

\end{document}